\newcounter{version}
\newcounter{arxiv}\setcounter{arxiv}{4}
\newcounter{bells}\setcounter{bells}{3}
\newcounter{full}\setcounter{full}{2}
\newcounter{minimal}\setcounter{minimal}{1}
\long\def\commentout#1{}
\newif\ifprint
	\definecolor{linkred}{rgb}{0,0,0} %
	\definecolor{linkblue}{rgb}{0,0,0} %
	\definecolor{linkred}{rgb}{0.7,0.2,0.2}
	\definecolor{linkblue}{rgb}{0,0.2,0.6}
\definecolor{green}{rgb}{0,0.8,0.1}
\titleformat{\section}{\normalfont\large\bfseries}{\thesection.}{0.5em}{}[\kern0.em]
\titleformat{\subsection}[runin]{\normalfont\bfseries}{\thesubsection.}{0.2em}{}[\kern0.5em]
\titleformat{\subsubsection}[runin]{\normalfont\bfseries}{\thesubsubsection.}{0.1em}{}[\kern0.5em]
\def\maketitle{\par
  \@topnum\z@ %
  \@setcopyright
  \thispagestyle{firstpage}%
  \ifx\@empty\shortauthors \let\shortauthors\shorttitle
  \else \andify\shortauthors
  \fi
  \@maketitle@hook
  \begingroup
  \@maketitle
  \toks@\@xp{\shortauthors}\@temptokena\@xp{\shorttitle}%
  \toks4{\def\\{ \ignorespaces}}%
  \edef\@tempa{%
    \@nx\markboth{\the\toks4
      \@nx{\the\toks@}}{\the\@temptokena}}%
  \@tempa
  \endgroup
  \c@footnote\z@
  \@cleartopmattertags
}
\def\@settitle{\begin{center}%
  \baselineskip14\p@\relax
    \bfseries
\Large%
\@title%
  \end{center}%
}
\def\author@andify{%
  \nxandlist {\unskip ,\penalty-1 \space\ignorespaces}%
    {\unskip {} \@@and~}%
    {\unskip ,\penalty-2 \space \@@and~}%
}
\def\@setauthors{%
  \begingroup
  \def\thanks{\protect\thanks@warning}%
  \trivlist
  \centering\footnotesize \@topsep30\p@\relax
  \advance\@topsep by -\baselineskip
  \item\relax
  \author@andify\authors
  \def\\{\protect\linebreak}%
{\large\authors}%
  \ifx\@empty\contribs
  \else
    ,\penalty-3 \space \@setcontribs
    \@closetoccontribs
  \fi
  \endtrivlist
  \endgroup
}
\def\@setaddresses{\par
  \nobreak \begingroup
\footnotesize
  \def\author##1{\nobreak\addvspace\bigskipamount}%
  \def\\{\unskip, \ignorespaces}%
  \interlinepenalty\@M
  \def\address##1##2{\begingroup
    \par\addvspace\bigskipamount\indent
    \@ifnotempty{##1}{(\ignorespaces##1\unskip) }%
    {\ignorespaces##2}\par\endgroup}%
  \def\curraddr##1##2{\begingroup
    \@ifnotempty{##2}{\nobreak\indent\curraddrname
      \@ifnotempty{##1}{, \ignorespaces##1\unskip}\/:\space
      ##2\par}\endgroup}%
  \def\email##1##2{\begingroup
    \@ifnotempty{##2}{\nobreak\indent\emailaddrname
      \@ifnotempty{##1}{, \ignorespaces##1\unskip}\/:\space
      \ttfamily##2\par}\endgroup}%
  \def\urladdr##1##2{\begingroup
    \def~{\char`\~}%
    \@ifnotempty{##2}{\nobreak\indent\urladdrname
      \@ifnotempty{##1}{, \ignorespaces##1\unskip}\/:\space
      \ttfamily##2\par}\endgroup}%
  \addresses
  \endgroup
}
\renewenvironment{abstract}{%
  \ifx\maketitle\relax
    \ClassWarning{\@classname}{Abstract should precede
      \protect\maketitle\space in AMS document classes; reported}%
  \fi
  \global\setbox\abstractbox=\vtop \bgroup
    \normalfont\Small
    \list{}{\labelwidth\z@
      \leftmargin3pc \rightmargin\leftmargin
      \listparindent\normalparindent \itemindent\z@
      \parsep\z@ \@plus\p@
      
    }%
    \item[\hskip\labelsep\bfseries\abstractname.]%
}{%
  \endlist\egroup
  \ifx\@setabstract\relax \@setabstracta \fi
}
\def\section{\@startsection{section}{1}%
  \z@{.7\linespacing\@plus\linespacing}{.5\linespacing}%
  {\normalfont\bfseries\centering}}%
\renewcommand*\l@section{\@tocline{1}{0pt}{0em}{1.75em}{}}
\renewcommand*\l@subsection{\@tocline{2}{0pt}{1.75em}{2em}{}}
\def\ps@normal{\def\@oddhead{\small \hfill \leftmark \hfill\thepage }
\def\@evenhead{\small \thepage \hfill \rightmark \hfill}
\def\@oddfoot{}
\def\@evenfoot{}}
\def\bib@div@mark#1{%
 \@mkboth{{#1}}{{#1}}%
	}
\def\print@backrefs#1{%
 \space\SentenceSpace$\leftarrow$\csname br@#1\endcsname
}
\renewcommand{\PrintAuthors}[1]{%
 \ifx\previous@primary\current@primary
  \sameauthors\@empty
 \else
  \def\current@bibfield{\bib'author}%
		  \PrintNames{}{}{\scshape #1}%
 \fi
}
\def\MRhref#1#2{%
 \begingroup
 \parse@MR#1 ()\@empty\@nil%
  \href{\MR@url}{\texttt{\@tempd\vphantom{()}}}%
  \ifx\@tempe\@empty
  \else
   \ \href{\MR@url}{\texttt{(\@tempe)}}%
  \fi
 \endgroup
}%
\def\MR#1{%
 \relax\ifhmode\unskip\spacefactor3000 \space\fi
 \begingroup
 \strip@MRprefix#1\@nil
  \edef\@tempa{\@nx\MRhref{MR\@tempa}{\@tempa}}%
 \@xp\endgroup
 \@tempa
}
\newcommand{\loosespread}{\linespread{1.12}\normalfont\selectfont}
\newcommand{\bibspread}{\linespread{1.05}\normalfont\selectfont}
\newcommand{\noparskip}{\parskip0pt}
\newtheoremstyle{questions}{\thm@preskip}{\thm@preskip}%
     {\itshape}%
     {}%
     {\bfseries}%
     {.}%
     {6pt}%
     {\thmname{#1}\thmnumber{ #2}\thmnote{ \textbf{~[#3]}}}%
\theoremstyle{plain}
\newtheorem{thm}[equation]{Theorem}
\newtheorem*{GKthm}{Gasarch--Kruskal Theorem}
\newtheorem{prop}[equation]{Proposition}
\newtheorem{lem}[equation]{Lemma}
\newtheorem{cor}[equation]{Corollary}
\newtheorem{clm}[equation]{Claim}
\theoremstyle{questions}%
\theoremstyle{definition}
\newtheorem{definition}[equation]{Definition}
\newtheorem{rem}[equation]{Remark}%
\theoremstyle{remark}
\newcommand{\numberedinsertiontop}[4]{%
\vskip3pt%
\begin{center}\refstepcounter{equation}\label{#1}%
\centerline{\textbf{#2~\ref{#1}}~ #3}%
{#4}%
\end{center}%
\vskip3pt%
}%
\newcommand{\numberedinsertionbot}[4]{%
\vskip6pt%
\begin{center}\refstepcounter{equation}\label{#1}%
{#4}\par%
\centerline{\textbf{#2~\ref{#1}}~ #3}%
\end{center}%
\vskip3pt%
}%
\newcommand{\divides}{|}
\newcommand{\Aut}[1]{\operatorname{Aut}}
\newcommand{\RR}{\mathbb{R}}
\newcommand{\sq}[1]{[#1]}
\newcommand{\ang}[1]{\langle{#1}\rangle}
\newcommand{\TT}{\mathbf{T}}
\newcommand{\JJ}{\mathbf{J}}
\newcommand{\dd}{\mathbf{d}}
\newcommand{\jj}{\mathbf{j}}
\newcommand{\ee}{\mathbf{e}}
\renewcommand{\aa}{\mathbf{a}}
\renewcommand{\SS}{\mathbf{S}}
\newcommand{\sect}[1]{{section~\ref{#1}}}
\newcommand{\Sect}[1]{{Section~\ref{#1}}}
\numberwithin{equation}{section}
\newif\ifauthornotes\authornotesfalse
\long\def\commentout#1{}%
\begin{document}
\loosespread
\noparskip 

\hbox{~}\vskip-36pt

\ifnum\value{version}>\value{arxiv}%
	\author[Ian Morrison]{{\normalsize\selectfont Ian Morrison}}
	\address{Department of Mathematics\\
	Fordham University\\
	Bronx, NY 10458}
	\email{morrison@fordham.edu}
\fi%

\title[Sacks of dice with fair totals]{Sacks of dice with fair totals}

\subjclass[2010]{Primary 60C05, 12D05}
\keywords{fair, dice}

\commentout{
\begin{abstract}
A fair sack is a finite set of independent dice, not required to be fair and allowed to have any number of sides, for which all totals are equally likely. These have been studied for over 60 years. Most results restrict the possible orders of dice in such a sack and almost no examples were known. Building on a rather different approach due to Gasarch and Kruskal, we give an explicit construction of all such sacks.
\end{abstract}}

\maketitle
\thispagestyle{empty}%

\section{Introduction}\label{introduction}

This paper gives a construction of all finite collections or sacks of independent dice such that, when the dice are rolled, all possible totals of the sides are equally likely. We begin with a brief history of work on the problem of characterizing such \emph{fair sacks}, then outline the plan of the paper.

Over 60 years ago, the familiar fact that a pair of fair cubical dice has totals that are unfair prompted J.~B.~Kelly to pose as \textsc{Monthly} problem E~925~\cite{Eninetwofive}, the converse question ``Can unfair dice have fair totals?'' which has a negative answer. A number of papers \cites{ChenRaoShreve, DudewiczDann, GasarchKruskal, Eninetwofive, MorrisonSwinarski, Parzen}, reviewed at the end of \sect{background},
have considered this question for more general sacks. Most give conditions on the orders of the dice in a sack that guarantee unfairness and very few examples of fair sacks were known. 

Gasarch and Kruskal~\cite{GasarchKruskal}, however, asked, ``Do all fair sacks share some common structure?'' They found local and global answers that are explained in detail in  Section 2. Locally, all dice in a fair sack must themselves be \emph{semifair}.\footnote{Gasarch and Kruskal use the less suggestive term ``nice.''} Globally, the sack must satisfy \emph{Uniqueness of Totals}: exactly one roll yields each total. Their work does not provide any way to test a sack of semifair dice for this global property other than brute force enumeration of the totals of all rolls\footnote{In \sect{applications}, we give an improved algorithm to test sacks of semifair dice for fairness.} and, although they gave examples of fair sacks with this property, they found no systematic construction. 

Our main result is a canonical construction of every fair sack. Here is a precis of how we proceed. \Sect{construction} gives a fuller guide, illustrating the steps by rostering all fair sacks with largest total $t=12$---the smallest $t$ that reveals all the wrinkles of the general case---and explaining, without proof, how their construction generalizes. Details and proofs of the general constructions are given in \sect{details}. We start from the observation that ``fair sacks give factorizations of $t$.'' Informally, we would like to invert this association but it is many-to-one, so we proceed in stages, enhancing factorizations by first ordering the factors, and then adding an auxiliary partition. Corollary~\ref{sacksection} produces an injective map from ordered factorizations to fair sacks and Proposition~\ref{mainconstruction} extends this map to all partitioned factorizations. The extension is no longer injective, but Theorem~\ref{intervalfreeunique} shows that restricting to interval free partitions (Definition~\ref{intervalfree}) gives an injective map with the same image. In \Sect{maintheorems}, after a motivating example again with $t=12$, we prove our main result, Theorem~\ref{mainthm}, which shows that this restriction is also a surjection---that is, Theorem~\ref{intervalfreeunique} constructs \emph{all} fair sacks. In \sect{applications}, we give a few applications.
Our methods are completely elementary, relying principally on a systematic exploitation of Uniqueness of Totals.

\section{The Gasarch--Kruskal theorem.}\label{background}

In this section, we define notions and notation used in the sequel, then state and prove the Gasarch--Kruskal theorem streamlining the original arguments slightly.

A \emph{die} $\dd$ of order $n \ge 2$ is a finite probability space whose sample space is the set $\ang{n}:= \{0, 1, \ldots , n-1 \}$ but that may have \emph{any} probability distribution. Indexing by $\ang{n}$ rather than the standard $\sq{n}:= \{1, 2, \ldots, n\}$ simplifies many formulae in the sequel. We use the terms \emph{roll} and \emph{side} as synonyms for \emph{trial} and \emph{outcome}, respectively, motivated by the example of standard cubical dice. However, our dice often have sides with probability $0$, so a better mental model is a spinner mounted over a circle divided into $n$ arbitrary sectors. The language of dice is historical in our problem. 

We index sides of dice by $j$ and denote the probability of  side $j$ by ${p}_\dd(j)$, omitting the $\dd$ when possible. We will confound the die $\dd$, the tautological random variable whose value on side $j$ is $j$, and the \emph{die polynomial} $\dd(x) := \sum_{j=0}^{n-1} {p}(j)x^j$ which is the generating function of this random variable. For example, a standard fair cubical die has $\dd(x) := \frac{1}{6}(1+x+x^2+x^3+x^4+x^5)$. As in this example, we will always write such polynomials with degrees increasing from left to right.

\begin{definition}\label{semifair} A die is \emph{semifair} if:
	\begin{enumerate} 
		\item\label{semifair1} Each ${p}(j)$ is either $0$ or equal to ${p}(0)$, which must thus be nonzero.
		\item\label{semifair2} It is \emph{palindromic}: that is, ${p}(n-j-1)={p}(j)$.
	\end{enumerate}
\end{definition}

\begin{rem}\label{semifairem} A few remarks about semifairness are in order. 
	\begin{enumerate}
		\item \label{sfa} Henceforth, we abuse  notation by rescaling semifair dice so that $p(0)$, the common value of the nonzero $p(j)$, is $1$. Since the probability condition that the unscaled $p(j)$ sum to exactly $1$ allows us to reverse the scaling, we lose nothing by assuming this. Doing so allows us to avoid denominators and be able to work with monic polynomials throughout.
		\item \label{sfb} Set $\Psi_t(x) := 1+x+\dots+x^{t-1} = \bigl(\frac{1-x^{t}}{1-x}\bigr)$. The first form shows that  $\Psi_t(x)$ is the polynomial of a fair die of order $t$---see the cubical example above---and the second that its roots are exactly the $t${th} roots of unity, except for~$1$.
		\item A die $\dd$ of order $n$ is semifair if and only if $\dd(x)$ is obtained from $\Psi_n(x)$ by setting to $0$ a palindromic set of the interior coefficients.
	\end{enumerate}
\end{rem}

A \emph{sack} $\SS$ of size $m_\SS$ is a set of \emph{independent} dice $\dd_i$ of orders $n_i \ge 2$ indexed by $i \in \sq{m_\SS}$. To simplify notation, we omit reference to $\SS$ when it is understood and write, for example, $m$ for $m_\SS$. Such an $\SS$ has a product sample space $\JJ$ indexed  by rolls $\jj = (j_1, j_2, \ldots, j_m) \in \prod_{i\in \sq{m}} \ang{n_i}$ that carries, by independence, the product probability distribution 
$p(\jj) = \prod_{i \in \sq{m}} p_{\dd_i}(j_i)$.

On $\JJ$, we have independent random variables for each die $\dd_i$  whose value on any roll $\jj$ is $j_i$ and whose generating function is thus the die polynomial $\dd_i(x)$. We sum these to get the \emph{total random variable} $\TT(\jj) := \sum_{i \in \sq{m}}j_i$ which takes on the $t$ values in $\ang{t}$, where $t -1 := \sum_{i \in \sq{m}} (n_i-1) $. 

Since the generating function of a sum of independent random variables is the product of the generating functions of its terms (see \cite[p.~180, Theorem~6]{Chung}), the total $\TT$ has generating function
\begin{equation}\label{prodequation}
\TT(x) = \prod_{i \in \sq{m}} \dd_i(x) = \sum_{s = 0}^{t-1}\biggl( \sum_{T(\jj) = s} p(\jj)\biggr)x^s\,. 
\end{equation}
For two standard dice, this is a shifted form of the  familiar formula for totals:
{\small\[1{\kern0.5pt+\kern0.5pt}2x{\kern0.5pt+\kern0.5pt}3x^2{\kern0.5pt+\kern0.5pt}4x^3{\kern0.5pt+\kern0.5pt}5x^4{\kern0.5pt+\kern0.5pt}6x^5{\kern0.5pt+\kern0.5pt}5x^6{\kern0.5pt+\kern0.5pt}4x^7{\kern0.5pt+\kern0.5pt}3x^8{\kern0.5pt+\kern0.5pt}2x^9{\kern0.5pt+\kern0.5pt}x^{10}=(1{\kern0.5pt+\kern0.5pt}x{\kern0.5pt+\kern0.5pt}x^2{\kern0.5pt+\kern0.5pt}x^3{\kern0.5pt+\kern0.5pt}x^4{\kern0.5pt+\kern0.5pt}x^5)^2.\]}%
A fair sack is simply one for which $\TT(x)=\Psi_t(x)$ (see Remark~\ref{semifairem}.\ref{sfb}).

\begin{GKthm}[{{\cite[Corollary~5]{GasarchKruskal}}}] A sack is fair if and only if:		
	\begin{enumerate}
		\item\label{sfd} Each die in it is \emph{semifair}.
		\item\label{gku} \textbf{\textup{(Uniqueness of Totals)}}~ Each total is obtained from a unique \emph{effective} roll.	
	\end{enumerate}
\end{GKthm}

We first show that the dice $\dd(x)$ in a fair sack must be semifair which is the heart of the theorem. Palandromicity is easy. If the sack has order $t$, let $\zeta $ be a primitive $t\,${th} root of unity. The irreducible real factors of $\Psi_t(x)$ are $x+1$, when $t$ is even, and $x^2-(\zeta^j+\zeta^{-j})x+1$, for $j
\in \sq{\lfloor \frac{t-1}{2}\rfloor}$. These are palindromic and $\dd(t)$, being real, must be a product of them, so it is palindromic. The key step is the following.
\begin{lem}[{{\cite[Lemma~3]{GasarchKruskal}}}]\label{semipalinduction}
If $\dd(x) := \dd'(x)\cdot \dd''(x)$ is semifair and both $\dd'$ and $\dd''$ are palindromic, then both $\dd'$ and $\dd''$ are semifair.
\end{lem}

Given the lemma, an induction on the size $s$ of a fair sack $\SS$ shows semifairness of its dice. The case $s=1$ is trivial. If $s\ge 2$, just take any two dice $\dd'(x)$ and $\dd''(x)$  and replace them by their product $\dd(x)$, getting a fair sack of smaller size whose dice, in particular $\dd(x)$, must inductively  be semifair. Since we know already that $\dd'$ and $\dd''$ are palindromic, the lemma then shows that both are also semifair.

\begin{proof}[Proof of Lemma~\ref{semipalinduction}]
Without loss of generality, assume that $n' \le n''$. 
\begin{clm}\label{zeroclaim}~ 
For $ j \in \sq{n'-1}$, either $p'(j)=0$ or $p''(j)=0$.
\end{clm}
Since all the coefficients are nonnegative, the claim  will follow if we show that $\sum_{j=1}^{n'-1}p'(j) p''({j}) \le 0$. To see this, use the palandromicity of $\dd'$ once to write
\[ 
\sum_{j=1}^{n'-1}p'(j) p''({j}) = \sum_{j=1}^{n'-1}p'({n'-1-j}) p''(j) = p({n'-1}) -p'({n'-1}) p''(0)\,,
\]
and then a second time to write
\[ 
 p({n'-1}) -p'({n'-1}) p''(0) = p({n'-1}) -p'(0)p''(0) = p({n'-1}) -p(0) \le 0\,,
\]
with the last inequality following because $\dd(x)$ is semifair. 

For notational convenience, we define $p'(j)=0$ for $n'\le j <n''$. With this convention, Claim~\ref{zeroclaim} then holds for $ j \in \sq{n''-1}$. By palindromicity and monicity $p'(0)= p''(0) =1$, so we can expand
\begin{equation}\label{pjexp}
p(j) :=  \sum_{i=0}^{j}p'(i) p''({j-i})= p'(j) +  \sum_{i=1}^{j-1}p'(i) p''({j-i})  +p''(j)\,.
\end{equation}

We will use this expansion to show, by induction on $j \in \sq{n''}$, that each of $p'(j)$ and $p''(j)$ is either $0$ or $1$. 
By hypothesis, $p(j)$ and, by induction, all the terms in the middle sum in~\eqref{pjexp} are either $0$ or $1$. Since all terms are nonnegative, if $p(j)=0$, then all the terms in the sum as well as both $p'(j)$ and $p''(j)$ must also be $0$. If $p(j)=1$, there are two possibilities. Either exactly one term in the sum is $1$ and both $p'(j)$ and $p''(j)$ are $0$, or, all the terms in the sum are $0$ and $p'(j)+p''(j)=1$. But one of $p'(j)$ and $p''(j)$ is $0$ by the claim, so the other must then equal~$1$. 
\end{proof}
	
Semifairness of its dice is necessary but far from sufficient for the fairness of a sack. For example, although standard dice are semifair, a pair is an unfair sack.	Indeed, Corollary~\ref{degreedivisibility} shows that most semifair dice do not lie in \emph{any} fair sack.

To see that the additional global property \emph{Uniqueness of Totals} is both necessary and sufficient for fairness, we ask what~\eqref{pjexp} implies about a sack $\SS$ of semifair dice. Since all $p_i(j)$ are either $0$ or $1$, each product $p(\jj)$ is also either $0$ or $1$. In the latter case, we call $\jj$ \emph{effective} and must have $p_i(j_i)=1$ for all $i$. Thus, the coefficient of $x^s$ in $\TT(x)$ simply counts the number of effective rolls with total $s$. So the sack $\SS$ is \emph{fair} and all totals are equally likely if and only if each total arises from the \emph{same} number of effective rolls. Since the total $0$ arises from exactly one effective roll, with all $j_i=0$, \emph{every} total must arise from exactly one effective roll---Uniqueness of Totals---and $\TT(x) = \Psi_t(x)$.

Our other basic tool here is an easy consequence, not mentioned in \cite{GasarchKruskal}. If a fair sack contains a die with a nonzero $x^s$ term for any $s>0$, then the total $s$ arises from rolling $s$ on this die and $0$ on all the others. By Uniqueness of Totals, we deduce the following.

\begin{cor}\label{uniqueterms} \textbf{\textup{(Uniqueness of Terms)}}~  A fair sack can contain at most one die with nonzero $x^s$ term for any $s \in \sq{t}$ and contains such a die if and only if $x^s$ does \emph{not} arise as a product of terms of strictly lower degree. In particular, there is always a unique die with nonzero $x$ term.
\end{cor}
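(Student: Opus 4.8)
\emph{Plan.} Throughout I would keep fixed the monic normalization set up just above, so that the Gasarch--Kruskal Theorem (Theorem~\ref{GKtheorem}) applies: every $\dd_i(x)$ is semifair, $\TT(x) = \psi_t(x)$, each product probability $p(\jj)$ is $0$ or $1$, and for each total $s$ exactly one roll $\jj$ has $p(\jj)=1$. The only extra input needed is that a monic semifair die $\dd$ has $p_\dd(0) = p_\dd(n) = 1$, which is immediate from palindromicity together with $p_\dd(n) > 0$. I read ``$x^s$ arises as a product of terms of strictly lower degree'' relative to the factorization $\TT(x) = \prod_i \dd_i(x)$, i.e.\ as the existence of a roll $\jj$ with $p(\jj) = 1$, $T(\jj) = s$, and every $j_i < s$.

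The first step is a dichotomy for the shape of any roll with $T(\jj) = s$, $s \in \ang{t}$. Since the $j_i \ge 0$ sum to $s$, every $j_i \le s$, and at most one $j_i$ can equal $s$ (two would force $T(\jj) \ge 2s > s$, as $s \ge 1$). Hence either a single $j_{i_0} = s$ with all other $j_i = 0$ (the \emph{pure} roll at $i_0$), or every $j_i < s$ (a \emph{mixed} roll). A pure roll at $i_0$ has $p(\jj) = 1$ exactly when $\dd_{i_0}$ has a non-zero $x^s$ term, since then the other factors contribute their $x^0$ term of coefficient $1$; and a mixed roll with $p(\jj) = 1$ is precisely what ``$x^s$ arises as a product of terms of strictly lower degree'' means.

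With this in hand, the corollary is a direct application of Uniqueness of Totals. If two distinct dice $\dd_i, \dd_{i'}$ both had a non-zero $x^s$ term with $s \ge 1$, the pure rolls at $i$ and at $i'$ would be distinct elements of $\JJ$ (they differ in coordinates $i$ and $i'$, because $s \ne 0$), each of probability $1$ and total $s$, which is impossible; this is the ``at most one die'' clause. For the equivalence: if the sack contains a die with a non-zero $x^s$ term, the corresponding pure roll is the unique roll of total $s$ with positive probability, so no mixed roll of total $s$ has positive probability, i.e.\ $x^s$ does not arise from strictly-lower-degree terms; conversely, since $\TT(x) = \psi_t(x)$ has $x^s$-coefficient $1$ for $1 \le s \le t$, some roll has total $s$ and probability $1$, and by the dichotomy it is pure once it fails to be mixed, producing a die with non-zero $x^s$ term, necessarily unique by the first clause. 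Finally, the ``in particular'' statement is the case $s = 1$ (vacuous unless $t \ge 1$): there a product of strictly-lower-degree terms is a product of $x^0$'s, which equals $1 \ne x$, so the mixed alternative is excluded and a unique die with non-zero $x$ term must exist. I do not expect a genuine obstacle; the only point needing care is the bookkeeping in the dichotomy and the verification that the two pure rolls compared above are genuinely different in $\JJ$, both of which come down to $s \ne 0$.
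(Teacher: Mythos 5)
Your proof is correct and follows the same route as the paper: the paper's entire argument is the sentence preceding the corollary, namely that a non-zero $x^s$ term yields the ``pure'' roll ($s$ on that die, $0$ on the others) of total $s$, after which Uniqueness of Totals does all the work. Your pure/mixed dichotomy and the use of the monic normalization (so that $\psi_t(x)$ has every coefficient equal to $1$) are just the careful bookkeeping the paper leaves implicit.
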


Since a semifair die of order $n$ has a nonzero $x^{n-1}$ term, we get the following.

\begin{cor}\label{orderrestrictions}  No two dice in a fair sack can have the \emph{same} order. \end{cor}
	We digress for a moment to document work of several earlier authors (most mutually unaware of each other) on special cases of this result. Almost all the arguments use inequalities involving the side probabilities to reach a contradiction. This is the approach of Moser and Wahab~\cite{Eninetwofive} to show there is no fair pair of dice of order $6$. Dudewicz and Dann~\cite{DudewiczDann},\footnote{Their title suggests, incorrectly, that no fair sacks exist, and they make a mysterious claim in the last line of the paper that ``similar results'' hold for general sacks.} although they do not cite~\cite{Eninetwofive}, note that, for  identical cubical dice, the conclusion is a ``well-known'' exercise and cite the text of Parzen~\cite{Parzen}, where this is Problem 9.12.\footnote{That no fair sack consisting of two dice of order $n$ exists is also, as noted by the referee, ``well known.'' The reader may enjoy checking this. Hint: Obtain a contradiction by showing that the total $n-1$ has probability at least $p_0p'_{n-1}+p_{n-1}p'_0=\frac{1}{2n-1}\bigl(\frac{p_0}{p_{n-1}} +\frac{p_{n-1}}{p_0}\bigr) \ge \frac{2}{2n-1}$, seeing the equality by observing that fairness implies $p_{n-1}p'_{n-1}=p_0p'_{0}=\frac{1}{2n-1}$ and the inequality by using a bit of calculus.} They prove that no fair sack (other than a singleton) can have \emph{all} dice of equal order $n$ by showing that the total $n-1$ must have probability strictly greater than $\frac{1}{t}$. Their result is reproved (but not cited) by Chen, Rao and Shreve~\cite{ChenRaoShreve} by showing that there must be a pair of totals whose probabilities differ by at least $\bigl\lvert\frac{m-1}{m^2 n}\bigr\rvert$. The stronger claim that all orders must be distinct was first proved by Gasarch and Kruskal~\cite{GasarchKruskal} (although they cite, incorrectly, \cite{ChenRaoShreve}), by casting the argument for the $s=n-1$ case of Corollary~\ref{uniqueterms} as a series of inequalities.

We should also mention an overlapping result. No fair sack can contain more than one die of even order. Such dice have polynomials of odd degree which must have a real root. But $\Psi_t(x)$ has no real roots for odd $t$ and exactly one for even $t$. This argument first occurs in the proof of Finch and Halmos~\cite{Eninetwofive} that there is no fair pair of dice of order $6$ and is also found in~\cite{GasarchKruskal} and~\cite{MorrisonSwinarski} for other even orders.

\section{Guide to the constructions and roster of fair sacks with total $\mathbf{12}$.}\label{construction} 

\subsection*{From fair sacks to unordered factorizations.}

A \emph{factorization} of $t$ of length $\ell$ will be a tuple $\aa :=(a_1, a_2, \ldots, a_\ell)$, usually indexed by $h$ and viewed as \emph{ordered}, for which
\begin{equation}\label{factortplus1}
	\prod_{h \in \sq{\ell}} a_h = t, \text{~~~and with each $a_h$ at least $2$.}
\end{equation} 
Note that we do \emph{not} require the $a_h$ to be prime. 

We start by noting that the Gasarch--Kruskal Theorem implies that any fair sack $\SS$ yields an \emph{unordered} factorization $\aa$ of length equal to the order of $\SS$ by taking $a_h$ to be the number of nonzero coefficients of $\dd_h(x)$. In (\ref{factortplus1}), the equation holds because each side counts the number of nonzero terms $p(\jj)$ in (\ref{prodequation}) and the inequalities on the $a_h$ hold by Remark~\ref{semifairem}.\ref{sfa}. We immediately get the last statement of~\cite[Corollary~9]{GasarchKruskal}: for $t$ prime, the only fair sack is a single fair $t$-die. Simply put, ``fair sacks give unordered factorizations.''

\subsection*{From ordered factorizations to fair sacks.}

The next step is to show that ``\emph{ordered} factorizations give fair sacks.'' More precisely, Corollary~\ref{sacksection} constructs, from each ordered factorization $\aa$, a factorization sack $\SS_\aa$ of size $\ell$ with dice $\dd_h(x):=\Psi_{a_h}(x^{b_h})$, where $b_h := \prod_{h' < h} a_{h'}$. Table~\ref{twelvefair} shows the sacks that arise for $t=12$.

\enlargethispage{\baselineskip}
\numberedinsertiontop{twelvefair}{Table}{Ordered factorizations $\aa$ of $12$ and their fair sacks $\SS_\aa$}{\nopagebreak%
	\begin{tabular}{>{$}c<{$}>{$}c<{$}}
		 a_1\cdot a_2 \cdots a_\ell  & \dd_{1}(x)\cdot \dd_{2}(x) \cdots \dd_{\ell}(x))\\
	2\cdot 2\cdot 3	& (1+x)(1+x^2)(1+x^4+x^8) \\
	2\cdot 3\cdot 2 & (1+x)(1+x^2+x^4)(1+x^6) \\
	2\cdot 6 & (1+x)(1+x^2+x^4+x^6+x^8+x^{10}) \\
	3\cdot 2\cdot 2 & (1+x+x^2)(1+x^3)(1+x^6) \\
	3\cdot 4 & (1+x+x^2)(1+x^3+x^6+x^9) \\
	4\cdot 3 & (1+x+x^2+x^3)(1+x^4+x^8) \\
	6\cdot 2 & (1+x+x^2+x^3+x^4+x^5)(1+x^6) \\
	12 & \kern-8pt(1+x+x^2+x^3+x^4+x^5+x^6+x^7+x^8+x^9+x^{10}+x^{11})
	\end{tabular}
}

Lemma~\ref{partialfairness} and Corollary~\ref{sacksection} imply that the sacks $\SS_\aa$ are always fair. However, not all fair sacks arise from this construction.

\subsection*{Partition-factorization sacks.}

Further fair sacks can be produced from factorization sacks by a collapsing or subtotaling process in which we replace disjoint subsets of the dice by their total dice. Equivalently, by (\ref{prodequation}), we can replace the polynomials of the dice in the subset by their product. Such a collapsing is specified more precisely by a partition $\Pi := [\pi_i, \pi_2, \ldots, \pi_m]$ of $\sq{\ell}$ which we will view both as a disjoint union decomposition  $\{1,2,\ldots,\ell\} = \mathop{\dot\bigcup}_{g=1}^m \pi_g$ and as a surjective function from $\sq{\ell} \to \sq{m}$ with fiber $\pi_g$ over $g$. 

\begin{definition}\label{partfact} To each part $\pi_g$ of $\Pi$, associate a subtotal die $\dd_{g}(x) = \prod_{h \in \pi_g} \dd_h(x)$ which, by (\ref{prodequation}), is the total die of the subsack of $\SS_\aa$ associated to $\pi_g$ and, to the pair $(\aa,\Pi)$, the \emph{partition-factorization} sack $\SS_{\aa,\Pi}$ consisting of the subtotal dice $\dd_{g}(x)$ of the parts of $\Pi$. We say that $\SS_{\aa,\Pi}$ arises from $\aa$ via~$\Pi$.
\end{definition}

Proposition~\ref{mainconstruction} implies that all such sacks are fair and that all arise from factorizations with each $a_h$ prime. For $t=12$, we get three new fair sacks in this way, by using the partition $\Pi = [\{1, 3\},\{2\}]$ with the three length $3$ factorizations. From $2\cdot 2\cdot 3$, we get the factorization $(1+x+x^4+x^5+x^8+x^9)(1+x^2)$ while $2\cdot 3 \cdot 2$ and $3\cdot 2\cdot 2$ give $(1+x+x^6+x^7)(1+x^2+x^4)$  and  $(1+x+x^2+x^6+x^7+x^8)(1+x^3)$. This turns out to complete the roster of fair sacks with $t=12$.

There are other partition-factorization sacks, but they already appear in Table~\ref{twelvefair}. For example, for the ordered factorization $2\cdot 2\cdot 3$ whose corresponding dice factorization is $(1+x)(1+x^2)(1+x^4+x^8)$, we get the $4\cdot 3$ line in the Table from $\Pi = [\{1,2\},\{3\}]$. More generally, while the association $\aa\to\SS_\aa$ is injective, its extension $(\aa, \Pi)\to\SS_{\aa, \Pi}$ is not. Restricting to prime factorizations does not cure this: the $6\cdot 2$ line in Table~\ref{twelvefair} arises from both of the ordered factorizations $2\cdot 3\cdot 2$ and $3\cdot 2\cdot 2$ via the partition $\Pi= [\{1, 2\},\{3\}]$.

\subsection*{Interval-free partitions.}
Fortunately, there is a simple way, already suggested by the examples above, to obtain all  partition-factorization sacks in a unique way by restricting which partitions are used. 
\begin{definition}\label{intervalfree}
	A partition $\Pi$ of $\aa$ is \emph{interval free} if no part contains consecutive elements of $\sq{\ell}$.
\end{definition}

In the example with $t=12$ above, the interval free partitions are those in Table~\ref{twelvefair} with all parts singletons and the partition $\Pi = [\{1, 3\},\{2\}]$ that yielded new fair sacks from the three length $3$ factorizations. Although, a priori, requiring interval freeness eliminates only the ambiguity arising  from collapsing consecutive factors, Theorem~\ref{intervalfreeunique} shows that each partition-factorization sack arises uniquely from an interval free partition of a (possibly different) factorization. 

\subsection*{Why do we obtain all fair sacks?}
So far we have constructed lots of fair sacks using simple combinatorial observations and the reader will see in the next section that the proofs are all fairly straightforward. We see no a priori reason to expect that our constructions produce \emph{all} fair sacks. Our main result, Theorem~\ref{mainthm}, shows that, in fact, they do. We prove it in \sect{maintheorems}, by making a careful inspection of a general fair sack to extract from it the canonical factorization and interval free partition from which it arises. The details of the analysis are considerably more delicate than what comes before. 

\section{Details and proofs of the constructions.}\label{details}

In this section, we define general partition-factorization sacks and show they are fair. As for sacks, we try to simplify notation by omitting reference to the factorization and partition when possible. We begin with an easy but crucial lemma that shows that factorization sacks are fair.

\begin{lem}\label{partialfairness} Fix an ordered factorization $\aa :=(a_1, a_2, \ldots, a_\ell)$ of $t$ of length $\ell$. For $h \in \sq{\ell+1}$, define $b_h := \prod_{h' < h} a_{h'}$ and note that, by hypothesis, $t= b_{\ell+1}$. For $h \in \sq{\ell}$, define  $\dd_{h}(x) := \Psi_{a_h}(x^{b_h})$ and $\ee_{h}(x) := \prod_{h' \le h}\dd_{h'}(x)$. Then $\ee_{h}(x) = \Psi_{b_{h+1}}(x)$. In particular, $\ee_\ell(x) = \Psi_{t}(x)$.
\end{lem}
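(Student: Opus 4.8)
The plan is to set every probabilistic aspect aside and prove this as a pure polynomial identity, by telescoping a product of geometric series. Concretely I will show $\ee_h(x)=1+x+\cdots+x^{b_{h+1}-1}$ for every $h\in\ang{\ell}$ --- the asserted value of $\ee_h$ --- whereupon the closing statement about $\ee_\ell$ is just the case $h=\ell$, since $b_{\ell+1}=t+1$ as noted in the statement.

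The crux is one remark about the $b_h$. Because $b_{h+1}=\prod_{h'\le h}a_{h'}=a_h\cdot\prod_{h'<h}a_{h'}=a_hb_h$, the factor $\dd_h(x)$ is a geometric sum with ratio $x^{b_h}$ and $a_h$ terms, so
\[
\dd_h(x)=1+x^{b_h}+x^{2b_h}+\cdots+x^{(a_h-1)b_h}=\frac{x^{a_hb_h}-1}{x^{b_h}-1}=\frac{x^{b_{h+1}}-1}{x^{b_h}-1},
\]
a genuine polynomial since $b_h\mid b_{h+1}$. Multiplying these for $h'=1,\dots,h$ now telescopes, the numerator $x^{b_{h'+1}}-1$ of the $h'$-th factor cancelling the denominator of the $(h'+1)$-st:
\[
\ee_h(x)=\prod_{h'=1}^{h}\frac{x^{b_{h'+1}}-1}{x^{b_{h'}}-1}=\frac{x^{b_{h+1}}-1}{x^{b_1}-1}=\frac{x^{b_{h+1}}-1}{x-1}=1+x+\cdots+x^{b_{h+1}-1},
\]
using $b_1=1$ (the empty product). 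Taking $h=\ell$ and $b_{\ell+1}=t+1$ gives $\ee_\ell(x)=1+x+\cdots+x^{t}$, the fair $t$-die, as claimed. The same computation can be packaged as an induction on $h$ via $\ee_h=\ee_{h-1}\dd_h$, the inductive step being the single identity
\[
(1+x+\cdots+x^{c-1})\,(1+x^{c}+x^{2c}+\cdots+x^{(a-1)c})=1+x+\cdots+x^{ac-1}
\]
applied with $c=b_h$ and $a=a_h$.

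I would also record why that last identity holds, since it is the engine of the whole construction: it is exactly uniqueness of mixed-radix (``base $b_h$'') representation. Expanding $\ee_h(x)=\prod_{h'\le h}\dd_{h'}(x)$ directly, the coefficient of $x^{N}$ counts the tuples $(c_1,\dots,c_h)$ with $0\le c_{h'}\le a_{h'}-1$ and $N=\sum_{h'\le h}c_{h'}b_{h'}$; since the $b_{h'}$ are precisely the partial products of the $a_{h'}$, there is exactly one such tuple when $0\le N\le b_{h+1}-1$ and none otherwise. I do not expect a genuine obstacle here: the lemma is a formal computation, and the only points that need care are keeping the nested products $b_h$ straight and checking the (immediate) divisibilities $b_h\mid b_{h+1}$ that license the quotient-form manipulations.
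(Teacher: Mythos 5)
Your proof is correct, and it takes a genuinely different (though closely related) route from the paper's. The paper argues through roots of unity: it observes that the roots of $\dd_h(x)=\psi_{a_h}(x^{b_h})$ are exactly the $b_{h+1}$-st roots of unity whose order does not divide $b_h$, inducts on $h$ to conclude that the roots of $\ee_h(x)$ are exactly the nontrivial $b_{h+1}$-st roots of unity, and finishes with ``monic polynomials with the same roots are equal.'' You instead write each factor in quotient form $\dd_h(x)=\frac{x^{b_{h+1}}-1}{x^{b_h}-1}$ and telescope the product. The two arguments are two faces of the same identity, but yours is the more formal and self-contained one: the paper's closing step tacitly uses that the root sets of the distinct $\dd_{h'}$ are pairwise disjoint and that each factor is separable (otherwise equality of root sets would not force equality of monic polynomials), whereas the telescoping cancellation needs no such care. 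Your mixed-radix remark is a worthwhile addition rather than a digression --- it is the coefficient-level reason the identity holds, and it is precisely Uniqueness of Totals for the factorization sack, which is the property the lemma is ultimately feeding into. One presentational note: you have (correctly) read $\psi_n$ here as the sum with $n$ terms, $1+x+\cdots+x^{n-1}$, which is the convention forced by the examples in Table~\ref{twelvefair} and by the conclusion $\ee_\ell(x)=\psi_{t+1}(x)$ being the fair $t$-die; under the paper's literal definition $\psi_t(x)=1+x+\cdots+x^t$ the degrees in the statement would be off by one, so it is worth flagging which convention you are using.
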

\begin{proof} Observe that the roots of $\dd_{h}(x)$ are exactly the $b_h${th} roots of all nontrivial $a_h${th} roots of unity or, equivalently, all $b_ha_h${th} roots of unity of order not dividing $b_h$ or, again equivalently, all the $b_{h+1}${st} roots of unity of order not dividing $b_h$. By induction on $h$, the roots of $\ee_h(x)$ are exactly the nontrivial $b_{h+1}${st} roots of unity. Since both sides are monic polynomials with the same roots, $\ee_h(x) = \Psi_{b_{h+1}}(x)$.
\end{proof}

\begin{cor}\label{sacksection} If $\aa$ is an ordered factorization of $t$ of length $\ell$, the \emph{factorization sack} $\SS_\aa$ of size $\ell$ whose dice are defined by $\dd_{h}(x) := \Psi_{a_h}(x^{b_h})$ is a fair sack with total $t$.
\end{cor}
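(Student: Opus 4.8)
The plan is to read Corollary~\ref{sacksection} off Lemma~\ref{partialfairness} with essentially no extra work, the only task being to rephrase the polynomial identity there in the probabilistic language of sacks. I would work throughout in the monic-scaling convention adopted after Theorem~\ref{GKtheorem}, under which a sack is just an ordered tuple of monic die polynomials, a monic polynomial is the monic scaling of a \emph{semifair} die exactly when it has nonnegative coefficients, positive leading coefficient, and is palindromic with all nonzero coefficients equal, and the sack is fair exactly when the product of its die polynomials is the monic scaling of the fair total die. Granting this dictionary, proving the Corollary reduces to two checks: (a) each proposed factor $\dd_h(x)=\psi_{a_h}(x^{b_h})$ is a genuine monic-scaled semifair die polynomial, so that $\SS_\aa$ really is a sack of $\ell$ independent semifair dice; and (b) $\prod_{h\in\ang{\ell}}\dd_h(x)$ is the monic scaling of the fair $t$-die, so that $\SS_\aa$ is fair with total $t$.

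For (a) I would simply note that $\dd_h(x)=\psi_{a_h}(x^{b_h})$ is a sum of distinct monomials $x^{jb_h}$ with all coefficients equal to $0$ or $1$---in particular nonnegative and all nonzero ones equal, and exactly $a_h$ of them nonzero, the count used to form $\aa_\SS$---with positive leading coefficient, and that it is palindromic because $b_h\mid s$ if and only if $b_h\mid(\deg\dd_h-s)$; the hypothesis $a_h\ge 2$ ensures $\dd_h$ is non-constant. Hence each $\dd_h$ is the monic scaling of a semifair die.

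For (b) the content is exactly the conclusion of Lemma~\ref{partialfairness}: by that lemma $\TT(x):=\prod_{h\in\ang{\ell}}\dd_h(x)=\ee_\ell(x)=\psi_{t+1}(x)$, the monic scaling of the fair $t$-die. By~(\ref{prodequation}), $\TT(x)$ is the total die polynomial of $\SS_\aa$, so the total die of $\SS_\aa$ is fair; by definition this means $\SS_\aa$ is fair, and its total is $\deg\TT(x)=\sum_{h\in\ang{\ell}}\deg\dd_h(x)=t$. As a cross-check I would also note that one can instead invoke Theorem~\ref{GKtheorem} directly: condition~(1), semifairness of the $\dd_h$, is (a), and condition~(\ref{gku}), Uniqueness of Totals, is the statement that every $s\in\sq{t}$ has a \emph{unique} representation $s=\sum_{h\in\ang{\ell}}e_hb_h$ with each digit $e_h\in\{0,1,\ldots,a_h-1\}$, which is uniqueness of mixed-radix expansions once one observes $b_{h+1}=a_hb_h$ and $b_{\ell+1}=t+1$.

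I do not expect any real obstacle. The whole arithmetic core---the cyclotomic root bookkeeping that collapses the telescoping product $\prod_h\psi_{a_h}(x^{b_h})$ to $\psi_{t+1}(x)$---has already been carried out in Lemma~\ref{partialfairness}, so Corollary~\ref{sacksection} is just its restatement. The only mildly substantive point remaining is the observation in (a) that each $\dd_h$ is palindromic with all nonzero coefficients equal, since that is precisely what upgrades it from an arbitrary die polynomial to a semifair one and makes the Gasarch--Kruskal criterion applicable; but this is immediate from the explicit monomial form of $\psi_{a_h}(x^{b_h})$.
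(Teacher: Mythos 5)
Your proof is correct and follows exactly the route the paper intends: the Corollary is read off from Lemma~\ref{partialfairness}, whose identity $\prod_{h}\dd_h(x)=\ee_\ell(x)=\psi_{b_{\ell+1}}(x)$ with $b_{\ell+1}=t+1$ shows via (\ref{prodequation}) that the total die of $\SS_\aa$ is the fair $t$-die. Your added check (a) that each $\psi_{a_h}(x^{b_h})$ is a genuine (indeed semifair) die polynomial, and the mixed-radix cross-check of Uniqueness of Totals, are harmless supplements the paper leaves implicit; just note that the degree count $\deg\TT=t$ relies on reading the factors with $a_h$ nonzero terms (degree $(a_h-1)b_h$, as in Table~\ref{twelvefair}), which you do.
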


We note an equation that follows from Lemma~\ref{partialfairness} by dividing the $h=v$ case by the $h=u$ case and canceling those $\dd_{h'}(x)$ that are factors of both $\ee_{v}(x)$ and $\ee_{u}(x)$.
\begin{equation}\label{intervaleq}
	\prod_{h = u}^{v}\dd_{h}(x) = \frac{\Psi_{b_{v+1}}(x)}{\Psi_{b_{u}}(x)}	
\end{equation}
This has a consequence that we will need later.

\begin{cor}\label{collapsecor} If $\aa$ is obtained from an ordered factorization $\aa'$  of length $\ell$ by replacing consecutive factors $a'_{u} \cdots a'_{v}$ by their product, then, $\dd_h(x) = \dd'_h(x)$ for $1\le h <u$,  $\dd_u(x) = \prod_{h=u}^{v}\dd'_h(x)$, and $\dd_h(x) = \dd'_{h+v-u}(x)$ for $u<h\le\ell-v+u$.
\end{cor}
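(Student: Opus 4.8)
The plan is to read off both sides of each asserted identity directly from equation~(\ref{intervaleq}), so essentially no new work is needed beyond bookkeeping of the $b$-sequences of $\aa$ versus $\aa'$. First I would set up notation: write $b'_h := \prod_{h'<h} a'_{h'}$ for the partial products attached to $\aa'$, as in Lemma~\ref{partialfairness}, and $b_h$ for those attached to the collapsed factorization $\aa$. Since $\aa$ agrees with $\aa'$ in positions $1,\dots,u-1$, we get $b_h = b'_h$ for $h\le u$; in particular $\dd_h(x) = \psi_{a_h}(x^{b_h}) = \psi_{a'_h}(x^{b'_h}) = \dd'_h(x)$ for $1\le h<u$, which is the first assertion. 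For the tail, note $a_h = a'_{h+v-u}$ when $h>u$, and that the partial product $b_h$ for such an $h$ equals $b'_{h+v-u}$ because collapsing $a'_u\cdots a'_v$ into a single factor $a'_u\cdots a'_v$ does not change the running product once you are past that block; hence $\dd_h(x) = \dd'_{h+v-u}(x)$ for $u<h\le \ell - v + u$, the third assertion.

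The middle assertion is the only one with any content, and it is where I would use~(\ref{intervaleq}). The collapsed factor in position $u$ is $a_u = a'_u\cdots a'_v$, and its $b$-values are $b_u = b'_u$ and $b_{u+1} = b_u\cdot a_u = b'_u\cdot a'_u\cdots a'_v = b'_{v+1}$. Therefore, applying the $h=u$ case of Lemma~\ref{partialfairness} to $\aa$,
\begin{equation*}
\dd_u(x) = \psi_{a_u}(x^{b_u}) = \frac{\psi_{b_{u+1}}(x)}{\psi_{b_u}(x)} = \frac{\psi_{b'_{v+1}}(x)}{\psi_{b'_u}(x)}\,,
\end{equation*}
where the middle equality is the $b=b_u$, $a=a_u$ instance of the identity $\psi_a(x^b) = \psi_{ab}(x)/\psi_b(x)$ that underlies~(\ref{intervaleq}). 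But the right-hand side is exactly what~(\ref{intervaleq}), applied to $\aa'$ with the index range $h=u$ to $h=v$, evaluates $\prod_{h=u}^{v}\dd'_h(x)$ to be. Hence $\dd_u(x) = \prod_{h=u}^{v}\dd'_h(x)$, completing the proof.

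The only mild obstacle is making sure the index arithmetic is airtight — that the size of $\aa$ is $\ell - v + u$, that the tail range $u<h\le \ell-v+u$ lines up correctly with $u<h+v-u\le\ell$ for $\aa'$, and that the equality $b_{u+1} = b'_{v+1}$ is stated cleanly. I would fold the first and third assertions into one sentence since they are immediate, and devote the displayed computation to the second. Alternatively, one can avoid~(\ref{intervaleq}) entirely and just invoke the telescoping identity $\psi_{a'_u}(x^{b'_u})\,\psi_{a'_{u+1}}(x^{b'_{u+1}})\cdots\psi_{a'_v}(x^{b'_v}) = \psi_{b'_{v+1}}(x)/\psi_{b'_u}(x)$, which is precisely the content of Lemma~\ref{partialfairness} restricted to the sub-factorization $(a'_u,\dots,a'_v)$ after rescaling exponents by $b'_u$; either route is routine.
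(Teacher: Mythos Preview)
Your proposal is correct and follows essentially the same approach as the paper: both establish the bookkeeping identities $b_h=b'_h$ for $h\le u$, $b_{u+1}=b'_{v+1}$, and $b_h=b'_{h+v-u}$ for $h>u$, then handle the only nontrivial case $\dd_u(x)$ by applying equation~(\ref{intervaleq}) once to $\aa$ (with the degenerate range $u=v$) and once to $\aa'$ (with the full range $u$ to $v$) and matching the right-hand sides. Your presentation is slightly more explicit in unwinding $\psi_{a_u}(x^{b_u}) = \psi_{b_{u+1}}(x)/\psi_{b_u}(x)$, but the substance is identical.
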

\begin{proof}  By construction, we have $a_h = a'_h$ for $1\le h < u$, we have $a_{u} = \prod_{h=u}^{v}a'_h$, and we have $a_h = a_{h+v-u}$ for $u<h\le\ell-v+u$. Therefore $b_h = b'_h$ for $h \le u$, $b_{u+1} = b'_{u'} \prod_{h=u}^{v}a'_h = b'_{v+1}$ and $b_h = b_{h+v-u}$ for $u<h\le\ell-v+u$. Thus, only the formula for $\dd_u(x)$ is not immediate. We may view $\dd_u(x)$ as the left side of (\ref{intervaleq}) applied to $\aa$ with $v= u$ and the product $\prod_{h=u}^{v}\dd'_h(x)$ as the left side of (\ref{intervaleq}) applied to $\aa'$. The formula just given for $b_{u+1}$ says that these two instances of (\ref{intervaleq}) have equal right-hand sides. Hence they have equal left-hand sides. 
\end{proof}

Next, we check that partition-factorization sacks are fair and that all arise, though generally in many ways, from partitions of prime factorizations.

\begin{prop}\label{mainconstruction}$\,$ 
	\begin{enumerate}
		\item Every partition-factorization sack is fair. 
		\item \label{fpprime} Every partition-factorization sack arises from an ordered \emph{prime} factorization.
		\end{enumerate} 
\end{prop}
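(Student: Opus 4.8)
The plan is to get part~(1) at once by comparing total dice, and to deduce part~(2) by refining the ancillary ordered factorization to a prime one while coarsening the partition to compensate.

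For part~(1), recall from~(\ref{prodequation}) that the total die of any sack is the product of its die polynomials. Hence the total die of a factorization--partition sack $\SS_{\aa,\Pi}$ is
\[
\prod_{g\in\ang{k}}\dd_g(x)\;=\;\prod_{g\in\ang{k}}\ \prod_{h\in\pi_g}\dd_h(x)\;=\;\prod_{h\in\ang{\ell}}\dd_h(x),
\]
where the last equality holds because the parts of $\Pi$ partition $\ang{\ell}$. That final product is precisely the total die of the factorization sack $\SS_\aa$, which equals $\psi_t(x)$ because $\SS_\aa$ is fair, by Corollary~\ref{sacksection}. Moreover each $\dd_g(x)=\prod_{h\in\pi_g}\dd_h(x)$, being a product of die polynomials, is again a die polynomial (cf.~(\ref{prodequation})), so $\SS_{\aa,\Pi}$ is a genuine sack; its total die $\psi_t(x)$ is fair, i.e.\ $\SS_{\aa,\Pi}$ is fair. (Alternatively one could check directly that each $\dd_g$ is semifair and that Uniqueness of Totals~(\ref{gku}) holds, as in Theorem~\ref{GKtheorem}, but the displayed identity is faster.)

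For part~(2), fix a factorization--partition sack $\SS_{\aa,\Pi}$ with $\aa=(a_1,\dots,a_\ell)$, and for each $h\in\ang{\ell}$ choose a prime factorization $a_h=p_{h,1}\cdots p_{h,r_h}$. Concatenating these lists gives an ordered \emph{prime} factorization $\aa'$ of $t+1$, say of size $\ell'$; by construction $\aa$ is recovered from $\aa'$ by replacing, for each $h$, the consecutive block $B_h\subseteq\ang{\ell'}$ of entries coming from $a_h$ by their product $a_h$, and $B_1,\dots,B_\ell$ are consecutive and partition $\ang{\ell'}$. Applying Corollary~\ref{collapsecor} once for each $h$---collapsing the blocks $B_\ell,B_{\ell-1},\dots,B_1$ from right to left, so that the positions of the not-yet-collapsed blocks never shift---identifies the dice of $\SS_\aa$ among those of the prime factorization sack $\SS_{\aa'}$: writing $\dd'_{h'}$ for the dice of $\SS_{\aa'}$, one obtains $\dd_h(x)=\prod_{h'\in B_h}\dd'_{h'}(x)$ for every $h\in\ang{\ell}$. (Equivalently, unwind the values $b_{h'}$ along $\aa'$ and invoke the identity $\psi_{mn}(y)=\psi_m(y)\,\psi_n(y^m)$ that underlies the proof of Lemma~\ref{partialfairness}.)

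It remains to transport the partition. Set $\pi'_g:=\bigcup_{h\in\pi_g}B_h$ for $g\in\ang{k}$; since the $B_h$ partition $\ang{\ell'}$ and the $\pi_g$ partition $\ang{\ell}$, the list $\Pi':=[\pi'_1,\dots,\pi'_k]$ is a partition of $\ang{\ell'}$. The $g$-th die of $\SS_{\aa',\Pi'}$ is then
\[
\prod_{h'\in\pi'_g}\dd'_{h'}(x)\;=\;\prod_{h\in\pi_g}\ \prod_{h'\in B_h}\dd'_{h'}(x)\;=\;\prod_{h\in\pi_g}\dd_h(x),
\]
which is exactly the $g$-th die of $\SS_{\aa,\Pi}$; hence $\SS_{\aa,\Pi}=\SS_{\aa',\Pi'}$ arises from the prime factorization $\aa'$, proving~(\ref{fpprime}). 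The one step carrying genuine content is the block identification $\dd_h=\prod_{h'\in B_h}\dd'_{h'}$; I expect this to be routine bookkeeping, and to keep it tidy I would, if necessary, first record the obvious ``collapse several disjoint blocks simultaneously'' strengthening of Corollary~\ref{collapsecor}. Everything else is formal manipulation of products and partitions.
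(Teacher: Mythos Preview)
Your proof is correct and follows essentially the same route as the paper: for (1) you compare the total die of $\SS_{\aa,\Pi}$ with that of $\SS_\aa$ via the partition of $\ang{\ell}$, and for (2) you refine $\aa$ to a prime factorization $\aa'$, use an inductive application of Corollary~\ref{collapsecor} to identify $\dd_h(x)=\prod_{h'\in B_h}\dd'_{h'}(x)$, and then transport $\Pi$ to $\Pi'$ by setting $\pi'_g=\bigcup_{h\in\pi_g}B_h$. The only differences are expository: you are more explicit about the order of collapsing and about why each $\dd_g$ is a die polynomial, and you mention an alternative via the identity $\psi_{mn}(y)=\psi_m(y)\,\psi_n(y^m)$, but the underlying argument is the paper's.
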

\begin{proof} Because $\Pi$ simply partitions the $\dd_h(x)$ into disjoint groups with products $\dd_{g}(x)$, the product of all the $\dd_h(x)$ and of all of the $\dd_{g}(x)$ are equal. The former, by Lemma~\ref{sacksection}, equals $\Psi_t(x)$. Hence, the sack $\SS_{\aa,\Pi}$ is also fair. 
	
	For (\ref{fpprime}), first construct an ordered prime factorization $\aa'$ by simply replacing each $a_h$ by an ordered prime factorization, writing the factors of $a_1$ first, then those of $a_2$ and so on. By an inductive application of Corollary~\ref{collapsecor}, the product of the dice polynomials associated to the prime factors of any $a_h$ equals $\dd_h(x)$. 
This implies that if $\Pi'$ is the partition with $m$ parts $\pi'_g$ each consisting of all the prime factors of the $a_h$ in $\pi_g$, then the dice associated to the $g${th} parts of $\Pi$ and $\Pi'$ are equal. 
\end{proof}

\begin{rem} Proposition~\ref{mainconstruction} can be used to construct all the fair sacks on p.137 of~\cite{GasarchKruskal}.\footnote{For the interested reader, here are the factorization (and the partition, if any parts are not singletons) giving each sack with its location on p.137 of \cite{GasarchKruskal} in parentheses: $2\cdot i$ (2); $i\cdot 2$ (3); $3\cdot i$ (4); $2\cdot 2\cdot i$ via $[\{1,3\},\{2\}]$ (5); $3\cdot 4$ and  $2\cdot2\cdot3$ via $[\{1,3\},\{2\}]$ (first paragraph after 5); $2\cdot 2 \ldots \cdot 2$ (second paragraph after 5).} 
\end{rem}

We close this section by checking that each partition-factorization sack is uniquely specified if we require interval freeness (see Definition~\ref{intervalfree}).

\begin{thm}\label{intervalfreeunique} Every partition-factorization sack $\SS$ arises from an interval free partition of an ordered factorization, both of which are uniquely determined by $\SS$.
\end{thm}
\begin{proof}
	Given a factorization and a partition of it, here is how to obtain from it a new factorization and an interval free partition without changing either the number of parts or any of the associated dice. If any of the given parts contains consecutive factors, replace these by their product in the factorization and assign this product factor to the part formerly containing the consecutive factors, leaving all other parts unchanged. The partition of the collapsed factorization that this produces is interval free. An application of Corollary~\ref{collapsecor} like that used in proving Proposition~\ref{mainconstruction}(\ref{fpprime}) shows that the dice associated to each of the corresponding old and new parts will be equal and, hence, they yield the same sack.

	We will prove the uniqueness of the interval free realization for a given sack $\SS$ by induction on the length $\ell$ of the factorization $\aa$. If this number is $1$, then we have a fair die. Otherwise, observe that, by Uniqueness of Terms (Corollary~\ref{uniqueterms}), there is a unique part $\pi_g$ whose die $d_{\pi_g}(x)$ has nonzero $x$ coefficient. In the construction of factorization sacks, only the die $d_1(x)$ has nonzero $x$-coefficient so $1$ must lie in $\pi_g$. We claim that $a_1$ is the smallest $s$ such that coefficient of $x^s$ in $d_{\pi_g}(x)$ equals $0$. No smaller power can have a zero coefficient because $d_1(x) = \Psi_{a_1}(x)$ is a factor of $d_{\pi_g}(x)$. Again, by construction, only the die $d_2(x)$ has nonzero $x^{a_1}$ coefficient. So if this coefficient were nonzero in $d_{\pi_g}(x)$, then $d_2(x)$ would be a factor and hence $2$ would also lie in $\pi_g$, contradicting the interval freeness of $\Pi$. Thus $\SS$ determines both $a_1$ and the index $g$ of the part containing $1$. 

	Now we replace $t$ by $t' :=\frac{t}{a_1}$, define an ordered factorization $\aa'$ of $t'$ by deleting $a_1$ from $\aa$, and define an interval free partition $\Pi'$ of $n-1$ by first deleting $1$ from $\pi_g$ (and deleting $\pi_g$ from $\Pi$ if it is now empty) and then shifting all parts left $1$. This yields an interval free realization of a sack $\SS'$, also determined by $\SS$, but with $\ell$ reduced by $1$. By induction, $\SS'$ determines $\aa'$ and $\Pi'$. But from $\aa'$ and $a_1$ we recover $\aa$. Similarly, from $\Pi'$ and the index $g$ of the part containing $1$ (or the fact that $1$ lay in a deleted part), we recover $\Pi$.
\end{proof}

\section{The main theorem.}\label{maintheorems}

The goal of this section is to prove that Theorem~\ref{intervalfreeunique} constructs all fair sacks.

\begin{thm}\label{mainthm} Every fair sack $\SS$ of size $m$ and total $t$ equals $\SS_{\aa,\Pi}$ for $\Pi$ a uniquely determined interval free partition with $m$ parts of an ordered factorization $\aa$ of $t$.
\end{thm}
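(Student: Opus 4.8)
The plan is to run an induction on the total $t$ (equivalently on $t+1$) that peels off, one prime at a time, the die carrying the lowest surviving power of $x$, mirroring the structure already used in the proof of Theorem~\ref{intervalfreeunique}. Given a fair sack $\SS$, by the Gasarch--Kruskal Theorem each die is semifair and Uniqueness of Totals holds, so after monic normalization all coefficients of all $\dd_i(x)$ and of $\TT(x)=\psi_t(x)$ are $0$ or $1$. The base case $t=0$ (or $t+1$ prime) is the already-noted fact that the only fair sack is a single fair die, matching the trivial factorization.

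For the inductive step, first I would locate the die that governs the low-degree behaviour. By Uniqueness of Terms (Corollary~\ref{uniqueterms}) there is a unique die $\gamma$ with nonzero $x$ coefficient. Let $a$ be the least positive integer with zero coefficient on $x^a$ in $\gamma(x)$; since $\gamma$ is semifair and $\psi_1\mid\cdots$, the coefficients of $1,x,\dots,x^{a-1}$ in $\gamma(x)$ are all $1$, so $\psi_{a-1}(x)\mid\gamma(x)$. The key claim is that in fact $\psi_{a-1}(x)$ is a \emph{factor of the sack} in the sense of \eqref{prodequation}: I want to show $a=p$ can be taken prime and that, writing $\gamma(x)=\psi_{p-1}(x)\,h(x^{p})$ for a suitable die polynomial $h$, replacing $\gamma$ in $\SS$ by $h(x^{p})$ (keeping the other dice unchanged) yields a fair sack $\SS''$ whose total is $\psi_{t''}(x)$ with $t''+1=(t+1)/p$. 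Here is where I would use Lemma~\ref{otherzero}: Uniqueness of Totals forces each low power $x,\dots,x^{p-1}$ to come only from $\gamma$, and then a term-by-term comparison of the coefficient of $x^{s}$ in $\TT(x)=\gamma(x)\cdot\!\!\prod_{i\ne\gamma}\dd_i(x)$ for $s<p$ shows $\prod_{i\ne\gamma}\dd_i(x)$ has no powers of $x$ below $x^{p}$; since its coefficient pattern must be $p$-periodic enough to multiply against $\psi_{p-1}$ and still produce $\psi_t$, it is actually a polynomial in $x^{p}$, and dividing $\psi_t(x)=\psi_{p-1}(x)\psi_{t''}(x^{p})$ identifies the residual sack with a fair sack on $t''$.

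Then the induction hypothesis applies to $\SS''$: it equals $\SS_{\aa'',\Pi''}$ for an interval-free partition $\Pi''$ with $k''$ parts of an ordered factorization $\aa''$ of $t''+1$. To finish I would reinsert the prime $p$. Prepend $p$ to $\aa''$ to get $\aa=(p,a''_1,\dots)$ of $t+1$; since $b_1=1$ this makes $\dd_1(x)=\psi_p(x^{1})=\psi_{p-1}... $ — more precisely $\dd_1(x)=\psi_{a_1}(x)$ with $a_1=p$, matching the $\psi_{p-1}(x)$ factor I split off — and the remaining $\dd_h(x)$ for the shifted factorization are exactly the old ones evaluated at $x^{p}$, which is how $h(x^{p})$ and the other dice of $\SS$ decompose. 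The partition $\Pi$ is obtained from $\Pi''$ by shifting all parts right by $1$ and then inserting the index $1$ into the part whose die, in $\SS$, carries the $x$ term: if that die is $h(x^{p})$ times nothing else it becomes the part $\{1\}$ together with the (shifted) part that produced $h$ in $\SS''$; interval-freeness is preserved because, exactly as in the proof of Theorem~\ref{intervalfreeunique}, the $x^{p}$ coefficient of $\gamma$ is zero, so index $2$ does not lie in the same part as index $1$. One checks $\SS_{\aa,\Pi}=\SS$ by comparing the associated total dice of each part, using Corollary~\ref{collapsecor} to absorb the prime refinement, just as in Proposition~\ref{mainconstruction}(\ref{fpprime}).

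The main obstacle I expect is the central claim that the ``residual'' polynomial $\prod_{i\ne\gamma}\dd_i(x)$ is genuinely a polynomial in $x^{p}$ and that the split $\gamma(x)=\psi_{p-1}(x)h(x^{p})$ is into \emph{die} polynomials — i.e., that $h$ again has $0/1$ palindromic coefficients with nonzero top coefficient. This is really a statement that Uniqueness of Totals propagates through division by $\psi_{p-1}$, and the delicate point is handling the interaction of $\gamma$'s higher-degree nonzero coefficients (beyond $x^{p-1}$) with the other dice; Lemma~\ref{otherzero}, applied with various choices of $s,s',u$, is the tool that rules out the unwanted collisions, but assembling these into a clean statement that $h(x^{p})$ and the untouched dice form a fair sack on $t''$ is the step that needs care. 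Everything after that is bookkeeping with the $b_h$'s and Corollary~\ref{collapsecor}.
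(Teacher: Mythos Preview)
Your strategy---induct on $t$, peel off the first factor by dividing $\gamma$ by $\psi_{p-1}(x)$, and reduce to a fair sack in the variable $x^p$---is a reasonable \emph{dual} to the paper's approach, which instead inducts on the number of factors already identified and builds up a \emph{partial realization} $(\aa,\Pi)$ without ever dividing anything out. The paper defines a partial realization to be a partial factorization together with a partial partition for which the dice agree with the predicted polynomials $\ff_g(x)$ modulo $x^{b_{\ell+1}}$, and then proves (Claim~\ref{mainclm}) that any partial realization of order $\ell$ with $b_{\ell+1}<t+1$ extends to one of order $\ell+1$. The extension is established by a double induction---first on the block index $r$, then on $\sigma=\sigma_\gamma(s)$---with Lemma~\ref{otherzero} supplying the exclusion at each step. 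Working entirely inside the original sack, the paper never has to check that a quotient is again a die polynomial or that a substitution $y=x^p$ yields a bona fide fair sack.

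Your acknowledged obstacle---that $\gamma(x)=\psi_{p-1}(x)\,h(x^p)$ with $h$ a die polynomial, and that every other $\dd_i(x)$ is a polynomial in $x^p$---is exactly this block structure, but you need it \emph{globally} for the first factor rather than incrementally. Establishing it still requires the same double induction (on $r=\lfloor s/p\rfloor$ and on the position within a block), so the reduction to a smaller $t$ does not shorten the hard step; it relocates it. Two intermediate assertions are also unjustified as stated. First, the least $a$ with $p_\gamma(a)=0$ need not be prime (e.g.\ $\gamma(x)=1+x+x^2+x^3$ gives $a=4$), so ``$a=p$ can be taken prime'' has to mean choosing a prime divisor of $a$---but then divisibility of $\gamma$ by $\psi_{p-1}$ no longer follows from the first $a$ coefficients being $1$. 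Second, even for $p=a$, the implication ``the coefficients of $1,x,\dots,x^{a-1}$ are all $1$, hence $\psi_{a-1}(x)\mid\gamma(x)$'' is false for semifair $\gamma$ in general (take $\gamma(x)=1+x+x^3+x^5+x^6$, which is semifair with $a=2$ but $\gamma(-1)\neq 0$); it holds only once you already know the block structure you are trying to prove. In short, the plan is sound and equivalent in strength to the paper's, but the step you flag as needing care is the entire content of the theorem, and carrying it out amounts to reproducing Claim~\ref{mainclm} in different packaging.
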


To get a feel for how the argument goes and where the key difficulty lies, consider how we might reconstruct, given only the dice themselves, the factorization and interval free partition associated to the two sacks of total $12$ with $\aa= 2\cdot 3 \cdot 2$, one with $\Pi = [\{1, \}, \{2\},\{3\}]$ and dice $(1+x)\cdot (1+x^2+x^4) \cdot (1+x^6)$ and the other with  $\Pi = [\{1, 3\},\{2\}]$ and dice $(1+x+x^6+x^7)\cdot (1+x^2+x^4)$.

In both cases, Uniqueness of Terms (Corollary~\ref{uniqueterms}) locates the first die as the unique one with nonzero $x$ term and the first factor $a_1= 2$ as the smallest degree \emph{not} appearing in its polynomial. Likewise, the second die is the one which \emph{does} have an $x^2$ term and the second factor $a_2=3$ is the smallest integer such that this die has no term of degree $2a_2$. Note that, while both $a_1$ and $a_1\cdot a_2$ divide $t=12$, the way we chose them gives no guarantee that they must. In both cases, products of known terms exactly account for all totals $s$ less than $b_2 := a_1a_2 = 6$.  Again, Uniqueness of Terms thus guarantees that the only terms of degree less than $b_2=6$ in either die are those already known: $1+x$ in the first and $1+x^2+x^4$ in the second. To this point, the argument works in general, changing, of course, $a_1$ and $a_2$.
  
At this point, there must be a unique die with an $x^6$ term. It has no $x^{12}$ term, so we set $a_3=2$. Only now are we assured that the $a_h$ give a factorization of $12$. When $\Pi = [\{1, \}, \{2\},\{3\}]$, the $x^6$ term occurs in the third die and products of known terms account for all totals $s < 12$. Hence no other nonzero terms can occur, and we are done. But when $\Pi = [\{1, 3\},\{2\}]$, such products do not produce an $x^7$ term since now the known terms $x$ and $x^6$ both occur in the first die. So Uniqueness of Terms tells us that there must be an $x^7$ term in some die. The key point that must be checked is that this term must occur in the first die (and, more generally, in similar situations, in the \emph{same} die as the $x^6$ term we have just located). Once we know that it does, then products of known terms in the two dice we have constructed uniquely account for all totals $s < 12$,  and we are again done. 
 
Why can the $x^7$ term \emph{not} lie in the second die, nor in some potential third die that we have yet to encounter in reconstructing the sack? If it did, the total $x^8$ would arise in two ways, as the product of the $x^6$ term in the first die and the $x^2$ term in the second, and as the product of the $x$ term in the first die and the $x^7$ term in the second or third die. This cannot happen, again by Uniqueness of Terms. 

The proof of Theorem~\ref{mainthm} for general $\SS$ uses the same basic ideas. However, as the number of factors increases, we encounter interval free partitions with arbitrarily large parts, for which the number of ``missing'' terms like $x^7$ that must be shown to be correctly located grows exponentially. The need to set up an induction that both keeps track of all these ``missing'' terms and allows us to identify for each, a degree, like $8$ in the example above, for which Uniqueness of Terms would be violated if the ``missing'' term were incorrectly located motivates the following definition. 

\begin{definition}\label{partialdef} For an ordered $\ell$-tuple $\aa := (a_1, a_2, \ldots, a_{\ell})$ with each $a_h >1$, define, as above, $b_{h}= \prod_{h'<h}a_{h'}$ for $h \le \ell+1$ (with $b_1=1$). Such an $\aa$ together with a map $\Pi: \sq{\ell}\to\sq{m}$ (thought of as the set of dice in $\SS$) is a \emph{truncated realization} of $\SS$ if,
	\begin{enumerate}
		\item\label{p1}  For all $g \in \sq{m}$, 
			\begin{equation}\label{ematchesd}
		\dd_g(x) \equiv \prod_{h\in\Pi^{-1}(g)} ~\Psi_{a_h}(x^{b_h}) \mod\bigl(x^{b_{\ell+1}}\bigr)\,.
			\end{equation}
		\item\label{p2}  For $1\le h < \ell$, $\Pi(h)\not= \Pi(h+1)$.
		\item\label{p3} The $b_{\ell+1}$-term in $\dd_{\Pi(\ell)}(x)$ is zero. 
	\end{enumerate}
We say that $(\aa',\Pi')$ \emph{extends} $(\aa,\Pi)$ if the initial $\ell$ values of both $\aa'$ and $\Pi'$ match those of $\aa$ and $\Pi$. 
\end{definition}

\begin{rem}\label{partialrem}~ 
Intuitively, (\ref{p1}) says that the dice in $\SS_{\aa,\Pi}$ and the degree $b_{\ell+1}$ truncations of those in $\SS$ are matching fair sacks with total $b_{\ell+1}$, modulo dice in $\SS$ with trivial truncations; (\ref{p2}) says that $\Pi$ is interval free; and  (\ref{p3}) lets us choose an extension that preserves (\ref{p2}).
\end{rem}

Suppose that we have a truncated realization for which $b_{\ell+1}\ge t$. Then, by the preceding remark, we have $\SS = \SS_{\aa,\Pi}$, but now without truncation of $\SS$. Hence, we must have $b_{\ell+1}= t$ and, retrospectively, $\aa$ must be a factorization of $t$. Finally, since all dice in $\SS$ are nontrivial, $\Pi$ must be surjective and its fibers determine an interval free partition.
Thus, Theorem~\ref{mainthm} will follow by induction, with a trivial base case when $\ell=0$ once we prove the following.

\begin{clm}\label{mainclm} Any truncated realization $(\aa,\Pi)$ of $\SS$ of length $\ell-1$ with $b_{\ell}<t$ can be extended to a truncated realization $(\aa',\Pi')$ of length $\ell$.
\end{clm}

Given $(\aa,\Pi)$ of length $\ell-1$, we first find $\Pi'$. Remark~\ref{partialrem} implies that $x^{b_{\ell}}$ does not arise as a product of lower degree terms in the $\dd_g(x)$. By Uniqueness of Terms for $\SS$, there must be a unique die $\dd_{\gamma}(x)$ with nonzero $x^{b_{\ell}}$ term. We define $\Pi'(\ell)=\gamma$. The condition that the $b_{\ell}$-term in $\dd_{\Pi(\ell-1)}(x)$ is zero in \ref{partialdef}(\ref{p3})  ensures that  $\Pi(\ell-1) \not =\Pi'(\ell)$. This and \ref{partialdef}(\ref{p2}) for $\Pi$ yield \ref{partialdef}(\ref{p2}) for $\Pi'$. 
		Next, we define $a_{\ell}$ to be the smallest positive integer such that the $a_{\ell}b_{\ell}$ term in $\dd_{\gamma}(x)$ is $0$. Again, this guarantees \ref{partialdef}(\ref{p3}) for $\Pi'$.
We must now show that the equations (\ref{ematchesd}) known inductively for $(\aa,\Pi)$ imply those needed for $(\aa',\Pi')$.

To clarify what this means, let $[a,b): =\{ a, a+1, \ldots, b-1\}$.  For a die $\dd$, define $S(\dd, a, b)$ to be the set of degrees $s'\in [a,b) $ of nonzero terms in $\dd$. In these terms, (\ref{ematchesd}) for $(\aa,\Pi)$ determines $S(\dd, 0, b_\ell)$ for all $\dd$ and what we have to check is the following.

\begin{clm}\label{gammaclaim}~ 
	\begin{enumerate}
		\item\label{gamma1} For $r\in[1,a_\ell)$, $S(\dd_\gamma, rb_\ell, (r+1)b_\ell)= rb_\ell+ S(\dd_\gamma, 0, b_\ell)$.
		\item\label{gamma2} For $r\in[1,a_\ell)$ and all $g \not= \gamma$, $S(\dd_g, rb_\ell, (r+1)b_\ell)= \emptyset$.
	\end{enumerate}
\end{clm}

Figure~\ref{missingterms} spells out Claim~\ref{gammaclaim} visually. Each line describes a die with the first line giving $\gamma$. The thick black vertical segment indicates $b_\ell$ and the thinner one(s) its multiples. Inductively known terms to the left of the $b_\ell$-bar are indicated by dots starting in degree $0$ on the left, with small dots for zero terms and large dots for nonzero ones. The squares and triangles to the right of the $b_\ell$-bar are the terms in $\dd_\gamma$ that we need to show are present, generalizing the ``missing'' $x^7$ in the example at the start of this section. All the small dots to the right of the $b_\ell$-bar are terms we need to show are zero. 

\enlargethispage{-\baselineskip}
\enlargethispage{-\baselineskip}

\vskip6pt 

\numberedinsertionbot{missingterms}{Figure}{Known and missing terms as predicted by equation (\ref{ematchesd}).}{%
\begin{tikzpicture}[scale=0.17]{
	\tikzstyle{zerodot}=[fill=black];
	\tikzstyle{onedot}=[fill=black];
	\tikzstyle{jump}=[semithick, color=red, -stealth];
	\tikzstyle{jumps}=[semithick, color=blue, -stealth];
	\tikzstyle{prejump}=[semithick, color=red];
	\tikzstyle{prejumps}=[semithick, color=blue];
	\foreach \x in {0,...,71} 
		\filldraw[zerodot] (\x,0) circle (0.1);
	\foreach \x in {0,...,71} 
		\filldraw[zerodot] (\x,-2) circle (0.1);
	\foreach \x in {0,...,71} 
		\filldraw[zerodot] (\x,-4) circle (0.1);
	\foreach \v in {0}{
		\foreach \u in {0,...,2}{
			\filldraw[onedot] (2*\u+18*\v,0) circle (0.25);
			}
		}
	\foreach \v in {1,...,3}{
		\foreach \u in {0}{
			\filldraw[onedot] (2*\u+18*\v,0) circle (0.25);
			}
		\foreach \u in {1,...,2}{
			\filldraw[color=black] (2*\u+18*\v-0.25,-0.25) rectangle (2*\u+18*\v+0.25,0.25);
			}
		\draw[] (18*\v,0.5)--(18*\v,-4.5);
		}
	\draw[very thick] (18,0.5)--(18,-4.5);
	\foreach \u in {0,...,1} 
		\filldraw[onedot] (\u,-2) circle (0.25);
	\foreach \u in {0,...,2} 
		\filldraw[onedot] (6*\u,-4) circle (0.25);
	\draw[] (36,-6.5) node[]{\small$(\aa',\Pi'){\,=\,}\bigl(2{\kern0.5pt\cdot\kern0.5pt} 3{\kern0.5pt\cdot\kern0.5pt} 4{\kern0.5pt\cdot\kern0.5pt} 3, [\{2,4\},\{1\},\{3\}]\bigr)$};
	
	\foreach \x in {0,...,71} 
		\filldraw[zerodot] (\x,-10) circle (0.1);
	\foreach \x in {0,...,71} 
		\filldraw[zerodot] (\x,-12) circle (0.1);

	\foreach \w in {0}{
		\foreach \v in {0,...,2}{
			\foreach \u in {0,...,1}{
				\filldraw[onedot] (1*\u+6*\v+36*\w,-10) circle (0.25);
				}
			}
		}
	\foreach \w in {1}{
		\foreach \v in {0,...,2}{
			\foreach \u in {0}{
				\filldraw[color=black] (1*\u+6*\v+36*\w-0.30,-10-0.25) -- (1*\u+6*\v+36*\w,-10+0.25) -- (1*\u+6*\v+36*\w+0.30,-10-0.25) -- cycle;
				}
			\foreach \u in {1}{
				\filldraw[color=black] (1*\u+6*\v+36*\w-0.25,-10-0.25) rectangle (1*\u+6*\v+36*\w+0.25,-10+0.25);
				}
			}
		\foreach \v in {1,...,2}{
			}
		\filldraw[onedot] (36,-10) circle (0.25);
		\draw[very thick] (36*\w,-8.5)--(36*\w,-12.5);
		}

	\foreach \v in {0,...,1}{
		\foreach \u in {0,...,2}{
			\filldraw[onedot] (2*\u+18*\v,-12) circle (0.25);
			}
		}
	\draw[] (36,-14.5) node[]{\kern-2pt\small$(\aa',\Pi'){\,=\,}\bigl(2{\kern0.5pt\cdot\kern0.5pt} 3{\kern0.5pt\cdot\kern0.5pt} 3{\kern0.5pt\cdot\kern0.5pt} 2 {\kern0.5pt\cdot\kern0.5pt} 2, [\{1,3,5\},\{2,4\}]\bigr)$.};
	
	}
\end{tikzpicture}%
}

We will check Claim~\ref{gammaclaim} by a ``per-vertical-bar'' induction on $r$. The inductive step follows, by a second
``per-square-and-triangle'' induction on $s$, from the following refined claim.

\begin{clm}\label{gammaclaims} For $s \in S(\dd_\gamma, 0,b_\ell)$,
$S(\dd_\gamma, r b_\ell, r b_\ell+ s)= rb_\ell+ S(\dd_\gamma, 0, s)$.
\end{clm}

This claim for a given $s$ (that is, for one of squares or triangles in the top $\gamma$ row of Figure~\ref{missingterms}) 
shows that we obtain exactly the totals in the range $[rb_\ell, rb_\ell+s)$ from those in $[0,s)$ by replacing a $\dd_\gamma$ factor $x^{s'}$ by $x^{rb_\ell+s'}$ and leaving all factors from other dice unchanged. By Uniqueness of Terms, no die $\dd_g$ except $\dd_\gamma$ can have any term of degree in the range $[rb_\ell, rb_\ell+s)$. This gives Claim~\ref{gammaclaim}(\ref{gamma1}) up to degree $rb_\ell +s$ and shows that products of terms of smaller degree do \emph{not} yield the total $rb_\ell+s$. By Uniqueness of Terms, some die must contain an $x^{rb_\ell+s}$ term. To complete the induction, we need to check that this die must be~$\dd_\gamma$.  

Here is the key step. Pick the largest $h$ in $\pi_\gamma$ for which $rb_\ell+s$ is divisible by $b_h$. This $h$ is also the smallest $h$ for which the $s$-term of $\dd_\gamma$ picks up a term of positive degree from the factor $\Psi_{a_h}(x^{b_h})$ of $\dd_\gamma$. By construction, $s':= s-b_h$ is also in $S(\dd_\gamma, 0, s)$ and, by our induction on $s$, we know that $\dd_\gamma$ contains a term of degree $rb_\ell+s'$. If $\Pi(h+1)=g$, then, by interval freeness, $g \not=\gamma$. By construction, we therefore know that $\dd_{g}$ contains a term of degree $b_{h+1}$ and that $\dd_{\gamma}$ contains a term of degree $(a_{h}-1)b_{h}=(b_{h+1}-b_{h})$. 

Figure~\ref{commongaps} illustrates these choices for the two examples in Figure~\ref{missingterms}. For each term $s \in rb_\ell+ S(\dd_\gamma, 0,b_\ell)$, one arrow is drawn going left from $s$ to the $s'= s-b_h$ that we know inductively to be nonzero in $\dd_\gamma$  and a second is drawn from the $(b_{h+1}-b_{h})$-term of $\dd_\gamma$ going down and right to the $b_{h+1}$-term of $\dd_g$. Terms with the same value of $h$ use the same marker (triangle or square) and line style (straight or snaked).

\numberedinsertionbot{commongaps}{Figure}{Nonzero coefficients of degrees differing by $+b_h$ and by $-b_h$.}{%
\begin{tikzpicture}[scale=0.17]{
	\tikzstyle{zerodot}=[fill=black];
	\tikzstyle{onedot}=[fill=black];
	\tikzstyle{jump}=[semithick, color=black, -stealth];
	\tikzstyle{jumps}=[semithick, color=black, -stealth, snake=snake,segment amplitude=.75,
         segment length=3,line after snake=1.5];
	\tikzstyle{prejump}=[semithick, color=black];
	\tikzstyle{prejumps}=[semithick, color=black, snake=snake,segment amplitude=.75,
         segment length=3,line after snake=1.5];
	\foreach \x in {0,...,71} 
		\filldraw[zerodot] (\x,0) circle (0.1);
	\foreach \x in {0,...,71} 
		\filldraw[zerodot] (\x,-2) circle (0.1);
	\foreach \x in {0,...,71} 
		\filldraw[zerodot] (\x,-4) circle (0.1);
	\foreach \v in {0}{
		\foreach \u in {0,...,2}{
			\filldraw[onedot] (2*\u+18*\v,0) circle (0.25);
			}
		}
	\foreach \v in {1,...,3}{
		\foreach \u in {0}{
			\filldraw[onedot] (2*\u+18*\v,0) circle (0.25);
			}
		\foreach \u in {1,...,2}{
			\filldraw[color=black] (2*\u+18*\v-0.25,-0.25) rectangle (2*\u+18*\v+0.25,0.25);
			\draw[jump] (2*\u+18*\v,0)--(2*\u-2+18*\v,0);
			}
		\draw[] (18*\v,0.5)--(18*\v,-4.5);
		}
	\draw[very thick] (18,0.5)--(18,-4.5);
	\foreach \u in {0,...,1} 
		\filldraw[onedot] (\u,-2) circle (0.25);
	\foreach \u in {0,...,2} 
		\filldraw[onedot] (6*\u,-4) circle (0.25);
	\draw[prejump] (4,0)--(4,-4);\draw[jump] (4,-4)--(6,-4);
	\draw[] (36,-6.5) node[]{\small$(\aa',\Pi'){\,=\,}\bigl(2{\kern0.5pt\cdot\kern0.5pt} 3{\kern0.5pt\cdot\kern0.5pt} 4{\kern0.5pt\cdot\kern0.5pt} 3, [\{2,4\},\{1\},\{3\}]\bigr)${;~~{\fontsize{4}{4}\selectfont$\blacksquare\,$} $h{\,=\,}2, b_h{\,=\,}2, b_{h+1}{\,=\,}6\,.$}};
	
	\foreach \x in {0,...,71} 
		\filldraw[zerodot] (\x,-10) circle (0.1);
	\foreach \x in {0,...,71} 
		\filldraw[zerodot] (\x,-12) circle (0.1);

	\foreach \w in {0}{
		\foreach \v in {0,...,2}{
			\foreach \u in {0,...,1}{
				\filldraw[onedot] (1*\u+6*\v+36*\w,-10) circle (0.25);
				}
			}
		}
	\foreach \w in {1}{
		\foreach \v in {0,...,2}{
			\foreach \u in {0}{
				\filldraw[color=black] (1*\u+6*\v+36*\w-0.30,-10-0.25) -- (1*\u+6*\v+36*\w,-10+0.25) -- (1*\u+6*\v+36*\w+0.30,-10-0.25) -- cycle;
				}
			\foreach \u in {1}{
				\filldraw[color=black] (1*\u+6*\v+36*\w-0.25,-10-0.25) rectangle (1*\u+6*\v+36*\w+0.25,-10+0.25);
				}
			\draw[jump] (1*1+6*\v+36*\w,-9.5)--(1*1+6*\v+36*\w-1,-9.5);
			}
		\foreach \v in {1,...,2}{
			\draw[jumps] (6*\v+36*\w,-10.5)--(6*\v+36*\w-6,-10.5);
			}
		\filldraw[onedot] (36,-10) circle (0.25);
		\draw[very thick] (36*\w,-8.5)--(36*\w,-12.5);
		}

	\foreach \v in {0,...,1}{
		\foreach \u in {0,...,2}{
			\filldraw[onedot] (2*\u+18*\v,-12) circle (0.25);
			}
		}
	\draw[prejump] (1,-10)--(1,-12);\draw[jump] (1,-12)--(2,-12);
	\draw[prejumps] (12,-10)--(12,-12);\draw[jumps] (12,-12)--(18,-12);
	\draw[] (36,-14.5) node[]{\kern-2pt\small$(\aa',\Pi'){\,=\,}\bigl(2{\kern0.5pt\cdot\kern0.5pt} 3{\kern0.5pt\cdot\kern0.5pt} 3{\kern0.5pt\cdot\kern0.5pt} 2 {\kern0.5pt\cdot\kern0.5pt} 2, [\{1,3,5\},\{2,4\}]\bigr)$; {{\fontsize{4}{4}\selectfont$\blacksquare\,$} $h{\,=\,}1, b_h{\,=\,}1, b_{h+1}{\,=\,}2$};{{\tiny~$\blacktriangle\,$}$h{\,=\,}3, b_h{\,=\,}6, b_{h+1}{\,=\,}18.$}};

	}
\end{tikzpicture}%
}

We now obtain a contradiction to Uniqueness of Terms if there is a term of degree $rb_\ell+s$ in any $\dd_{g'}$ with $g'$ not equal to $ \gamma$ (but possibly equal to $g$). Indeed, we would be able to produce terms of degree 
$rb_\ell+s+b_{h+1}-b_{h}$ in two distinct ways: first, using the terms of degrees $b_{h+1}-b_{h}$ in $\dd_\gamma$ and $rb_\ell+s$ in $\dd_{g'}$ and the constant terms from all other dice; and, second, using the terms of degrees $rb_\ell+ s-b_h$ in $\dd_\gamma$ and $b_{h+1}$ in $\dd_{g}$ and the constant terms from all other dice. Therefore, $\dd_\gamma$  must contain an $x^{rb_\ell+s}$ term as claimed in (\ref{gammaclaim}) and Theorem~\ref{mainthm} follows. 

Figure~\ref{doubleterm} illustrates this last step graphically, following Figures~\ref{missingterms} and~\ref{commongaps}. A~potential term of degree ${rb_\ell+s}$ in a $\dd_{g'}$ with $g'\not = \gamma$ is indicated by large black circle. A styled path joins this circle to the known term in $\dd_\gamma$ of degree smaller by $b_h$. A second path in the same style goes down and across between known nonzero terms of degrees less than $b_\ell$ and differing by $b_h$ as in Figure~\ref{commongaps}. Several potential terms may share a ``down-and-across'' path, as happens in the top example. For each potential term, the common total of the smaller degree from either of its paths and the larger degree from the other is marked by a vertical segment in the common style. 

\numberedinsertionbot{doubleterm}{Figure}{Duplicated totals when an $x^s$ term occurs in a die $\dd_g$ other than $\dd_\gamma$.}{%
	\begin{center}\begin{tikzpicture}[scale=0.17]{
		\tikzstyle{zerodot}=[fill=black];
		\tikzstyle{onedot}=[fill=black];
		\tikzstyle{ordot}=[thick, fill=orange, color=orange];
		\tikzstyle{grdot}=[thick, fill=green, color=green];
		\tikzstyle{orrdot}=[very thick, fill=orange, color=orange];
		\tikzstyle{grrdot}=[very thick, fill=green, color=green];
	\tikzstyle{jump}=[semithick, color=black, snake=zigzag,segment amplitude=.75,
         segment length=5,line after snake=0];
	\tikzstyle{jumps}=[semithick, color=black, snake=snake,segment amplitude=.75,
         segment length=3,line after snake=0];
	\tikzstyle{prejump}=[semithick, color=black];
	\tikzstyle{prejumps}=[semithick, color=black, snake=snake,segment amplitude=.75,
         segment length=3,line after snake=1.5];
		\foreach \x in {0,...,71} 
			\filldraw[zerodot] (\x,0) circle (0.1);
		\foreach \x in {0,...,71} 
			\filldraw[zerodot] (\x,-2) circle (0.1);
		\foreach \x in {0,...,71} 
			\filldraw[zerodot] (\x,-4) circle (0.1);
		\foreach \v in {0}{
			\foreach \u in {0,...,2}{
				\filldraw[onedot] (2*\u+18*\v,0) circle (0.25);
				}
			}
		\foreach \v in {1,...,3}{
			\foreach \u in {0}{
				\filldraw[onedot] (2*\u+18*\v,0) circle (0.25);
				}
			\foreach \u in {1,...,2}{
			\filldraw[color=black] (2*\u+18*\v-0.25,-0.25) rectangle (2*\u+18*\v+0.25,0.25);
				}
			\draw[] (18*\v,0.5)--(18*\v,-4.5);
			}
			\draw[very thick] (18,0.5)--(18,-4.5);
		\foreach \u in {0,...,1} 
			\filldraw[onedot] (\u,-2) circle (0.25);
		\foreach \u in {0,...,2} 
			\filldraw[onedot] (6*\u,-4) circle (0.25);
	\draw[] (36,-6.5) node[]{\small$(\aa',\Pi'){\,=\,}\bigl(2{\kern0.5pt\cdot\kern0.5pt} 3{\kern0.5pt\cdot\kern0.5pt} 4{\kern0.5pt\cdot\kern0.5pt} 3, [\{2,4\},\{1\},\{3\}]\bigr)${;~~{\fontsize{4}{4}\selectfont$\blacksquare\,$} $h{\,=\,}2, b_h{\,=\,}2, b_{h+1}{\,=\,}6\,.$}};

		\foreach \x in {0,...,71} 
			\filldraw[zerodot] (\x,-10) circle (0.1);
		\foreach \x in {0,...,71} 
			\filldraw[zerodot] (\x,-12) circle (0.1);

		\foreach \w in {0}{
			\foreach \v in {0,...,2}{
				\foreach \u in {0,...,1}{
					\filldraw[onedot] (1*\u+6*\v+36*\w,-10) circle (0.25);
					}
				}
			}
		\foreach \w in {1}{
			\foreach \v in {0,...,2}{
				\foreach \u in {0}{
				\filldraw[color=black] (1*\u+6*\v+36*\w-0.30,-10-0.25) -- (1*\u+6*\v+36*\w,-10+0.25) -- (1*\u+6*\v+36*\w+0.30,-10-0.25) -- cycle;
					}
				\foreach \u in {1}{
				\filldraw[color=black] (1*\u+6*\v+36*\w-0.25,-10-0.25) rectangle (1*\u+6*\v+36*\w+0.25,-10+0.25);
					}
				}
			\foreach \v in {1,...,2}{
				}
			\filldraw[onedot] (36,-10) circle (0.25);
			\draw[very thick] (36,-9.5)--(36,-12.5);
			}

		\foreach \v in {0,...,1}{
			\foreach \u in {0,...,2}{
				\filldraw[onedot] (2*\u+18*\v,-12) circle (0.25);
				}
			}
			
		\filldraw[onedot] (22,-2) circle (0.5);
		\filldraw[onedot] (38,-4) circle (0.5);
		\draw[jump] (3.7,0) -- (3.7,-4.3);
		\draw[jump] (3.7,-4.3) -- (6,-4.3);
		\draw[jumps] (4.3,0) -- (4.3,-3.7);
		\draw[jumps] (4.3,-3.7) -- (6,-3.7);
		\draw[jump] (20,0) -- (22,-2);
		\draw[jump] (26,0) -- (26,-4);
		\draw[jumps] (36,0) -- (38,-4);
		\draw[jumps] (42,0) -- (42,-4);
		\filldraw[onedot] (37,-12) circle (0.5);
		\filldraw[onedot] (48,-12) circle (0.5);
		\draw[jump] (1,-10) -- (1,-12);
		\draw[jump] (1,-12) -- (2,-12);
		\draw[jump] (36,-10) -- (37,-12);
		\draw[jump] (38,-10) -- (38,-12);
		\draw[jumps] (12,-10) -- (12,-12);
		\draw[jumps] (12,-12) -- (18,-12);
		\draw[jumps] (42,-10) -- (48,-12);
		\draw[jumps] (60,-10) -- (60,-12);

		\draw[] (36,-14.5) node[]{\kern-2pt\small$(\aa',\Pi'){\,=\,}\bigl(2{\kern0.5pt\cdot\kern0.5pt} 3{\kern0.5pt\cdot\kern0.5pt} 3{\kern0.5pt\cdot\kern0.5pt} 2 {\kern0.5pt\cdot\kern0.5pt} 2, [\{1,3,5\},\{2,4\}]\bigr)$; {{\fontsize{4}{4}\selectfont$\blacksquare$} $h{\,=\,}1, b_h{\,=\,}1, b_{h+1}{\,=\,}2$};{{\tiny~$\blacktriangle\,$}$h{\,=\,}3, b_h{\,=\,}6, b_{h+1}{\,=\,}18.$}};
			
	}\end{tikzpicture}%
\end{center}
}
\vfill\eject 
\section{Applications and further questions.}\label{applications}

\subsection*{What dice lie in fair sacks?} Viewing the Gasarch--Kruskal Theorem as saying that ``Every die in a fair sack is semifair'' naturally suggests the question, ``Does every semifair die occur in a fair sack?''. The answer is usually negative, and the simplest examples are the dice $\ee_s(x) = 1+x^s+x^{2s-1}+x^{3s-1}$ for $s\ge2$. 

For $\ee_2(x)= 1+x^2+x^3+x^5$, we simply have to note that, by Uniqueness of Terms, the sack must also contain a die of the form $(1+ x+\cdots)$ and then the total $x^3$ arises in two ways. For $\ee_3(x)= 1+x^3+x^5+x^8$, we need to argue that, since there is a unique die with an $x$ term, an $x^2$ term cannot arise as a product of lower degree factors. Hence there is a also a die with an $x^2$ term and this would give two ways to obtain $x^5$. Similar arguments fail, however, for $\ee_4(x) = 1+x^4+x^7+x^{11}$ because there are fair sacks for which no die has an $x^3$ term. However, if so, then $x^3$ must arise as a product of lower degree factors and hence the $x$ and $x^2$ terms occur in two \emph{different} dice in the sack, again giving us two ways to produce $x^7$. 
As $s$ increases, ruling out $\ee_s(x)$ requires considering increasingly large numbers of other dice.  Theorem~\ref{mainthm} provides a criterion that lets us read off, directly from $\dd(x)$, whether it lies in a fair sack and that immediately shows that no $\ee_s(x)$ does.

\begin{cor}\label{degreedivisibility} A die $\dd$ lies in a fair sack if and only if $\dd(x) = \prod_{j=1}^{m} \Psi_{a_j}(x^{b_j})$ with $a_j b_j \divides b_{j+1}$ for $1\le j < m$. In particular, the degrees of all nonzero terms of $\dd(x)$ are multiples of the smallest positive such degree.
\end{cor}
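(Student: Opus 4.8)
The plan is to derive the corollary directly from Theorem~\ref{mainthm}, which says that every fair sack $\SS$ is $\SS_{\aa,\Pi}$ for some interval free partition $\Pi$ of an ordered factorization $\aa$ of $t+1$, together with the construction of the dice $\dd_g(x) = \prod_{h\in\pi_g}\psi_{a_h}(x^{b_h})$ from Lemma~\ref{partialfairness}. For the ``only if'' direction, suppose $\dd$ lies in a fair sack $\SS$. By Theorem~\ref{mainthm}, $\dd = \dd_{g}$ for some part $\pi_g = \{h_1 < h_2 < \dots < h_\ell\}$ of $\Pi$, so $\dd(x) = \prod_{j=1}^{\ell}\psi_{a_{h_j}}(x^{b_{h_j}})$. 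Setting $\alpha_j := a_{h_j}$ and $\beta_j := b_{h_j}$, I need the divisibility $\alpha_j\beta_j \mid \beta_{j+1}$. Since in the original factorization $b_{h+1} = b_h a_h$, we have $\beta_j a_{h_j} = b_{h_j}a_{h_j} = b_{h_j + 1}$, and since $h_j + 1 \le h_{j+1}$ we get $b_{h_j+1} \mid b_{h_{j+1}} = \beta_{j+1}$ because the $b_h$ are increasing and each divides the next. So $\alpha_j\beta_j = \beta_j a_{h_j} = b_{h_j+1}\mid\beta_{j+1}$, as required (and $\beta_1 = b_{h_1}$ may be any divisor of $t+1$, not necessarily $1$, so the statement should be read with the $b_j$ in the corollary being the $\beta_j$ here — a relabeling).

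For the ``if'' direction, suppose $\dd(x) = \prod_{j=1}^{\ell}\psi_{\alpha_j}(x^{\beta_j})$ with $\alpha_j\beta_j \mid \beta_{j+1}$ for $1\le j<\ell$; I must exhibit a fair sack containing $\dd$. The idea is to build an ordered factorization $\aa$ whose associated dice $\dd_h(x)=\psi_{a_h}(x^{b_h})$ include exactly the factors $\psi_{\alpha_j}(x^{\beta_j})$, filling the ``gaps'' $\beta_j a_{\alpha_j}\mid\beta_{j+1}$ with auxiliary factors. Concretely: prepend a factor $\psi_{\beta_1}(x)$ (legitimate only if $\beta_1\ge 2$; if $\beta_1=1$ skip it) which by Lemma~\ref{partialfairness} has $b=1$; then $\psi_{\alpha_1}(x^{\beta_1})$ sits at level $b = \beta_1$; then insert a factor $\psi_{\beta_2/(\alpha_1\beta_1)}(x^{\alpha_1\beta_1})$ to climb from level $\alpha_1\beta_1$ to level $\beta_2$ (omitting it when the ratio is $1$); then $\psi_{\alpha_2}(x^{\beta_2})$; and so on. Finally append one more factor $\psi_{(t+1)/(\alpha_\ell\beta_\ell)}(x^{\alpha_\ell\beta_\ell})$ so that the running product of all $b_h a_h$ reaches $t+1$, where $t$ is chosen so that $\alpha_\ell\beta_\ell \mid t+1$ — e.g. just take $t+1 = \alpha_\ell\beta_\ell$ itself, making that last factor trivial. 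This produces a genuine ordered factorization $\aa$ of $t+1$ (each inserted exponent ratio is $\ge 2$ by construction, or the factor is omitted). Let $\Pi$ be the partition putting the $\ell$ distinguished factors into one part and every auxiliary factor into a singleton part; the auxiliary factors are never consecutive with each other in a way that forces an interval within a single part, and in any case Theorem~\ref{intervalfreeunique} lets us pass to the interval free realization without changing the sack. Then $\dd_{g}(x) = \prod_{j}\psi_{\alpha_j}(x^{\beta_j}) = \dd(x)$ is a die of the fair sack $\SS_{\aa,\Pi}$ by Proposition~\ref{mainconstruction}(1).

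The last sentence is then immediate: every non-zero term of $\dd(x) = \prod_{j=1}^\ell \psi_{\alpha_j}(x^{\beta_j})$ has degree a non-negative integer combination of $\beta_1,\dots,\beta_\ell$, and since $\beta_1\mid\beta_2\mid\dots\mid\beta_\ell$ (which follows from $\beta_j\mid\alpha_j\beta_j\mid\beta_{j+1}$), every such degree is a multiple of $\beta_1$, which is the smallest positive degree occurring (it is realized by the linear term of $\psi_{\alpha_1}(x^{\beta_1})$).

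I expect the main obstacle to be bookkeeping in the ``if'' direction: verifying that the interpolating construction really yields a valid ordered factorization (all factors $\ge 2$, the $b_h$ chain correct) and that the resulting $\Pi$ can be taken interval free without disturbing $\dd_g$. Both points are handled by Corollary~\ref{collapsecor}/Theorem~\ref{intervalfreeunique}, so the difficulty is purely organizational rather than conceptual. A cleaner alternative worth considering is to prove the ``if'' direction by strong induction on $\ell$, at each stage peeling off $\psi_{\alpha_1}(x^{\beta_1})$ and absorbing $\beta_1$ into a rescaling $x\mapsto x^{\beta_1}$ of a fair sack for the shorter product — but the explicit construction above is more transparent and ties the statement directly to Theorem~\ref{mainthm}.
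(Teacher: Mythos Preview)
Your proof is correct and follows essentially the same route as the paper: deduce the ``only if'' direction from Theorem~\ref{mainthm} by noting that every die in a factorization-partition sack has the stated form, and observe that the divisibility of all non-zero degrees by $\beta_1$ follows from the chain $\beta_1\mid\beta_2\mid\cdots\mid\beta_\ell$. The paper's proof is two sentences and leaves the ``if'' direction entirely implicit; your explicit gap-filling construction of an ordered factorization containing the given $\psi_{\alpha_j}(x^{\beta_j})$ as a subproduct is the natural way to spell it out, and your observation that Proposition~\ref{mainconstruction}(1) already gives fairness without needing interval freeness is the right shortcut.
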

\begin{proof} Any die in a partition-factorization sack has the claimed form, hence the first statement follows from Theorem~\ref{mainthm}. It immediately implies the second.
\end{proof}

As an example of a new restriction on the orders of dice in a fair sack, we sharpen Corollary~9 of~\cite{GasarchKruskal} which shows that a fair sack with total $t$ must contain a die of order at least $\phi(t)+1$ where $\phi$ is the Euler totient function.

\begin{cor}\label{largeorder} If $p$ is the smallest prime dividing $t$, then a fair sack with total $t$ always contains a die of order at least $t(1-\frac{1}{p})+1$. In particular, every fair sack with total $t$ contains a die of order at least $\frac{t}{2}+1$.	
\end{cor}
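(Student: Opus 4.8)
The plan is to normalize the sack via Theorem~\ref{mainthm} and then read off the order of one well-chosen die. By that theorem the given fair sack is $\SS_{\aa,\Pi}$ for an ordered factorization $\aa=(a_1,\dots,a_\ell)$ of $t+1$ and an interval free partition $\Pi$ with parts $\pi_1,\dots,\pi_k$; recall that $b_h=\prod_{h'<h}a_{h'}$, so $b_1=1$ and $b_{\ell+1}=t+1$, and that the die attached to a part $\pi_g$ is $\dd_g(x)=\prod_{h\in\pi_g}\psi_{a_h}(x^{b_h})$. Since $\psi_{a_h}(x^{b_h})=1+x^{b_h}+\dots+x^{(a_h-1)b_h}$ has degree $(a_h-1)b_h$, the largest side of $\dd_g$ is $\sum_{h\in\pi_g}(a_h-1)b_h$, and each summand is positive because $a_h\ge2$. (As a check, summing over all $g$ gives $\sum_h(b_{h+1}-b_h)=b_{\ell+1}-b_1=t$, the total.) The order of $\dd_g$ is one more than this.

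First I would take $g$ so that $\pi_g$ contains the largest index $\ell$. Discarding from the sum above every summand except the one for $h=\ell$, the largest side of $\dd_g$ is at least $(a_\ell-1)b_\ell=a_\ell b_\ell-b_\ell=b_{\ell+1}-b_\ell=(t+1)-\dfrac{t+1}{a_\ell}=(t+1)\Bigl(1-\dfrac1{a_\ell}\Bigr)$, so $\dd_g$ has order at least $(t+1)\bigl(1-\tfrac1{a_\ell}\bigr)+1$. It then remains only to bound $a_\ell$ from below by $p$, which is the single step needing an observation, albeit an elementary one: $a_\ell$ is an integer $\ge2$ dividing $t+1$, so any prime factor $q$ of $a_\ell$ also divides $t+1$ and hence satisfies $q\ge p$; thus $a_\ell\ge q\ge p$, whence $1-\tfrac1{a_\ell}\ge1-\tfrac1p$ and $\dd_g$ has order at least $(t+1)(1-\tfrac1p)+1$, as claimed.

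For the ``in particular'' clause one need only note that $p$, being prime, is at least $2$, so $(t+1)(1-\tfrac1p)+1\ge(t+1)(1-\tfrac12)+1=\tfrac{t+1}2+1$. I anticipate no real obstacle: once Theorem~\ref{mainthm} supplies the normal form $\SS_{\aa,\Pi}$, the argument is a short computation with the $b_h$'s plus the remark that a divisor $\ge2$ of $t+1$ cannot be smaller than its least prime factor. The only points to keep straight are the off-by-one between ``order'' and ``largest side'' and the freedom to choose the part containing the \emph{last} factor $\ell$ rather than the first.
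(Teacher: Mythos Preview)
Your proof is correct and follows essentially the same line as the paper's: pick the part $\pi_g$ containing the last index $\ell$, bound the degree of $\dd_g$ from below by $\deg\psi_{a_\ell}(x^{b_\ell})=(t+1)(1-\tfrac{1}{a_\ell})$, and use $a_\ell\ge p$. The only cosmetic difference is that the paper first passes (via Proposition~\ref{mainconstruction}(\ref{fpprime})) to an ordered \emph{prime} factorization, so that $a_\ell$ is itself prime and $a_\ell\ge p$ is immediate, whereas you work with the factorization supplied directly by Theorem~\ref{mainthm} and add the one-line remark that any divisor $\ge2$ of $t+1$ is bounded below by its least prime factor.
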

\begin{proof} Realize $\SS$ as a partition-factorization sack arising from a prime factorization $\aa$ of length $\ell$ using Proposition~\ref{mainconstruction}.(\ref{fpprime}). The polynomial $\Psi_{a_\ell}(x^{b_\ell})$ is a factor of the $\dd_{g}(x)$ associated to the part $\pi_g$ containing $\ell$. Since the degree of $\Psi_{a_\ell}(x^{b_\ell})$ is equal to $(a_\ell-1)b_\ell = t-{b_\ell}= t(1-\frac{1}{a_\ell})$, the degree of $\dd_{g}(x)$ is at least this large. Since $a_\ell \ge p$, the corresponding die has order at least $t(1-\frac{1}{p})+1$.
\end{proof}

\subsection*{Algorithmic aspects.} Theorem~\ref{mainthm} does a bit more than show that any fair sack arises from the constructions of \sect{details}. Its proof amounts to an algorithm for finding the factorization and interval free partition from which it arises. 

Likewise, Corollary~\ref{degreedivisibility} yields an algorithm for determining whether a given sack $\SS$ of semifair dice is fair that is more efficient than the brute force check of the uniqueness of all totals suggested in~\cite[Corollary~6]{GasarchKruskal}. Since such algorithms are of purely theoretical interest, we only sketch the idea, leaving details to the reader. 

The first step is to check that each die $\dd$ has the form of the Corollary, by a procedure like that in the proof of Theorem~\ref{mainthm}. For example, if $b_1$ is the lowest degree of a term occurring in $\dd$ and $a_1$ is the smallest positive number for which $\dd$ has not have a $a_1 b_1$ term, then  $\Psi_{a_1}(x^{b_1})$ must divide $\dd(x)$. If it does, we repeat this test for the quotient, inductively producing the sequence of $a_j$ and $b_j$ of the Corollary and stopping when the quotient is $1$. 

If each die in $\SS$ passes these tests, we let $P_\SS$ be the disjoint union of the sets of pairs $(a_j,b_j)$ for all dice $\dd$ in $\SS$ ordered so that the $b_j$ are nondecreasing, and set $\ell := |P_\SS|$ and $b_{\ell+1} := a_\ell b_\ell$. Then $\SS$ is fair exactly when this yields an ordered factorization of its total $t$: that is, $b_1=1$, $b_{h+1} = a_{h}b_{h}$ for  $1 \le h \le \ell$ and $b_{\ell+1}=t$.


\subsection*{Atomizations.} Finally, each die in a partition-factorization sack is itself the total die of the subsack determined by its part. This motivates the following definition which leads to our most striking corollary. 

A die is \emph{atomic} if it is not the total die of any sack of size $2$ or more---equivalently, if $\dd(x)$ does not factor in $\RR^+[x]$. A sack is atomic if all its dice are. Every die $\dd$ is the total die of an atomic sack, that we call an \emph{atomization} of $\dd$, by a standard argument. (If $\dd$ is not itself atomic, then it is the total die of a sack of dice, all of strictly smaller orders. By induction, these have atomizations whose union is an atomic sack with total~$\dd$.) 

Note, however, that atomizations are  usually \emph{not} unique. For example, in view of (\ref{atomicsfsack}) of Corollary~\ref{atomic}, the die in the $12$ line of Table~\ref{twelvefair} has $3$ atomizations, given in the $2\cdot2\cdot3$, $2\cdot3\cdot2$, and $3\cdot2\cdot2$ lines. The atomizations of a sack $\SS$ are the sacks obtained by atomizing, in any way, all the dice in $\SS$.

\begin{cor}\label{atomic}$\,$ \begin{enumerate}
	\item Any atomization of a fair sack is fair. 
	\item \label{atomicsfdie} The atomic dice that lie in some fair sack are those of the form $\Psi_p(x^b)$ with $p$ prime. 
	\item \label{atomicsfsack} The atomic fair sacks are the factorization sacks of ordered prime factorizations~$\aa$.
	\item Every atomic fair sack of size $m$ contains a unique fair subsack $\SS_{m'}$ of each size $m' \le m$ consisting of dice associated to the first $m'$ factors in $\aa$.
\end{enumerate}
\end{cor}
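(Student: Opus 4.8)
The plan is to establish the four parts in order, using Theorem~\ref{mainthm} and its Corollary~\ref{degreedivisibility} as the main engine. Part~(1) is immediate from the definitions: atomizing a die replaces it by a sack whose dice polynomials multiply back to that die's polynomial, so atomizing all the dice of $\SS$ leaves the product $\prod_i\dd_i(x)$ of~(\ref{prodequation}) --- hence the total die --- unchanged; since a sack is fair precisely when its total die is, any atomization of a fair sack is fair.

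For the ``$\supseteq$'' direction of Part~(2), I would first exhibit a fair sack containing each $\psi_p(x^b)$ with $p$ prime: when $b=1$ it is the single fair die of the size-one sack $\SS_{(p)}$, and when $b>1$ it is the last die of the factorization sack $\SS_\aa$ for $\aa=(c_1,\dots,c_r,p)$, where $c_1\cdots c_r=b$ is any factorization of $b$ into parts at least $2$. I would then show $\psi_p(x^b)$ is atomic: if $\psi_p(x^b)=\dd_1\dd_2$ with $\dd_1,\dd_2$ nonconstant dice, then replacing $\psi_p(x^b)$ by $\{\dd_1,\dd_2\}$ in the fair sack just produced gives a sack with the same total die, hence a fair sack in which $\dd_1$ and $\dd_2$ appear; so by Theorem~\ref{GKtheorem} each is semifair, hence --- in monic scaling --- has $0/1$ coefficients and, by Remark~\ref{semifairem}, at least two of them nonzero. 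Thus $\dd_1(1),\dd_2(1)$ are integers $\ge 2$, and $p=\dd_1(1)\dd_2(1)$ (the value of $\psi_p(x^b)$ at $x=1$) is composite, a contradiction. For ``$\subseteq$'', let $\dd$ be atomic and lie in a fair sack. By Corollary~\ref{degreedivisibility}, $\dd(x)=\prod_{h=1}^{\ell}\psi_{a_h}(x^{b_h})$ with each $a_h\ge 2$; if $\ell\ge 2$ this writes $\dd$ as a product of two nonconstant dice, contradicting atomicity, so $\ell=1$ and $\dd(x)=\psi_{a_1}(x^{b_1})$; and if $a_1=uv$ with $u,v\ge 2$ the identity $\psi_{uv}(y)=\psi_u(y^v)\psi_v(y)$ again contradicts atomicity, so $a_1$ is prime.

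Part~(3) then follows by combining Part~(2) with Theorem~\ref{mainthm}. A factorization sack $\SS_\aa$ of a prime factorization is fair by Corollary~\ref{sacksection}, and each of its dice $\psi_{a_h}(x^{b_h})$ is atomic by Part~(2), so it is an atomic fair sack. Conversely, given an atomic fair sack $\SS$, Theorem~\ref{mainthm} writes $\SS=\SS_{\aa,\Pi}$ with $\Pi$ interval free; any part $\pi_g$ with $|\pi_g|\ge 2$ would make the die $\dd_g(x)=\prod_{h\in\pi_g}\psi_{a_h}(x^{b_h})$ a product of two or more nonconstant dice, hence not atomic, so $\Pi$ is the discrete partition, $\SS=\SS_\aa$, and --- applying Part~(2) to each atomic factor $\psi_{a_h}(x^{b_h})$ --- every $a_h$ is prime. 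For Part~(4), write $\SS=\SS_\aa$ with $\aa=(a_1,\dots,a_n)$ a prime factorization and dice $\dd_h(x)=\psi_{a_h}(x^{b_h})$; then the subsack $\{\dd_1,\dots,\dd_{n'}\}$ is the factorization sack of the truncation $(a_1,\dots,a_{n'})$, hence fair with total die $\ee_{n'}(x)=\psi_{b_{n'+1}}(x)$ by Lemma~\ref{partialfairness}, which gives existence.

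For uniqueness in Part~(4), let $\SS'\subseteq\SS$ be a fair subsack of size $n'$. By Part~(3), $\SS'=\SS_{\aa'}$ for a prime factorization $\aa'=(a'_1,\dots,a'_{n'})$ with dice $\dd'_j(x)=\psi_{a'_j}(x^{b'_j})$; I would prove $\dd'_j=\dd_j$ for all $j$ by induction on $j$. Granting $\dd'_i=\dd_i$ --- so $b'_i=b_i$ --- for $i<j$ gives $b'_j=a'_1\cdots a'_{j-1}=b_j$, hence $\dd'_j$ has a nonzero $x^{b_j}$ term; but $\dd'_j$ is one of the dice of $\SS$, and by Uniqueness of Terms (Corollary~\ref{uniqueterms}) the only die of $\SS$ with a nonzero $x^{b_j}$ term is $\dd_j$, so $\dd'_j=\dd_j$ and therefore $\SS'=\{\dd_1,\dots,\dd_{n'}\}$. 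The step I expect to carry the weight is Part~(2): it relies on Corollary~\ref{degreedivisibility} (itself the payoff of Theorem~\ref{mainthm}) to force the shape $\prod\psi_{a_h}(x^{b_h})$, on the two elementary factoring identities to peel off a single prime, and on the evaluation-at-$1$ argument to see that a prime number of terms blocks any factorization into two semifair dice; the remaining parts are then bookkeeping with Uniqueness of Terms.
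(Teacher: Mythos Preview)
Your proof is correct, and for parts~(1), (3), and the existence half of~(4) it coincides with the paper's. In parts~(2) and the uniqueness half of~(4), however, you take a genuinely different route.

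For~(2), the paper argues indirectly: starting from a factorization sack $\SS_\aa$ with $\aa$ prime, any further atomization of a die $\psi_p(x^{b_h})$ would yield a fair sack with strictly more dice than $\SS_\aa$; but Theorem~\ref{mainthm} forces any fair sack to have at most $\Omega(t+1)$ dice (the number of parts of an interval-free partition of an ordered factorization), which equals~$n$ since $\aa$ was already prime, giving a contradiction. Your argument instead establishes atomicity of $\psi_p(x^b)$ directly via evaluation at $x=1$: any putative die factorization would, after embedding in a fair sack, consist of semifair factors whose term-counts multiply to the prime~$p$. This is more self-contained and avoids the counting detour; the paper's version has the mild advantage of handling~(2) and~(3) simultaneously.

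For the uniqueness in~(4), the paper inducts on~$n$ with a case split on whether the last die $\dd_n$ lies in the subsack~$\SS'$, reducing to $\SS_{n-1}$ and using that adding $\dd_n$ back to $\SS_{n'-1}$ misses the total~$b_{n-1}$. You instead apply Part~(3) to the fair subsack $\SS'$ itself, writing it as $\SS_{\aa'}$, and then peel off its dice in order using Uniqueness of Terms: $\dd'_j$ has a nonzero $x^{b_j}$ term, and among the dice $\psi_{a_h}(x^{b_h})$ of $\SS$ only $\dd_j$ does. Your argument is shorter and conceptually cleaner; the paper's has the virtue of not re-invoking Part~(3) on the subsack.
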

\begin{proof} The first claim holds because totals are preserved under atomization. The  proof of Proposition~\ref{mainconstruction} shows that only sacks associated to prime factorizations can be atomic. If a die in such a sack was not itself atomic, then by atomizing it we would obtain a fair sack contradicting Theorem~\ref{mainthm}. This proves the second and third assertions. Lemma~\ref{partialfairness} implies the fairness of the subsacks $\SS_{m'}$ in the last statement. Uniqueness follows by induction on $m$. If a fair subsack $\SS'$ of size $m'<m$ does not contain the die $\dd_m$ associated to the last factor in $\aa$, its intersection with $\SS_{m-1}$ is fair subsack of size $m'$ that, inductively, must equal $\SS_{m'}$. If $\dd_m \in \SS'$, we get a contradiction by removing it to produce a fair subsack $\SS''$ of $\SS_{m-1}$ of size $m'-1$ . By induction, $\SS''= \SS_{m'-1}$ which does not contain $\dd_{m'-1}$. Now, adding $\dd_m$ back to $\SS_{m'-1}$ to get $\SS'$ yields an unfair sack because the total $b_{m'-1}$ does not occur.
\end{proof} 

\enlargethispage{-\baselineskip}
\enlargethispage{-\baselineskip}

\subsection*{Closing questions for the reader.} We conclude by posing a few questions to the reader. We can factor the dice $\ee_s(x)$ defined above as $\ee_s(x)=(1+x^{s})(1+x^{2s-1})=\Psi_2(x^s)\Psi_2(x^{2s-1})$. This is an atomization by Corollary~\ref{atomic}.(\ref{atomicsfdie}) and, at least for small $s$, there are no others.\footnote{The roots of $\Psi_2(x^k)=1+x^k$ are those $2k${th} roots of unity that are not $k${th} roots of unity, from which its irreducible factors over $\RR$ are easily found. To find all atomizations of $\dd_s(x)$ by brute force, we simply enumerate all partitions of the factors for both $k=s$ and $k=s-1$, find those for which the product of the factors in each part has all coefficients nonnegative (and hence gives a die polynomial), and eliminate any that themselves contain nonatomic dice.} All other atomizations of semifair dice not lying in a fair sack that we have found contain only semifair dice, but this does not follow from (\ref{atomicsfdie}) above and we have not found any proof. So we ask the reader, ``\emph{Must every semifair die have a semifair atomization?}'' or, more greedily, ``\emph{Is semifairness closed under atomization?}''

In a related direction, we may define, following Chapman and McClain~\cite{ChapmanMcClain}, the elasticity of a polynomial in $\RR^+[x]$ to be the maximum of the ratios $\frac{n}{n'}$ for which the polynomial has an atomization with $n$ atoms and a second with $n'$. Examples are given in \cite{ChapmanMcClain} of polynomials having elasticity equal to any rational number $r\ge 1$. But while $\Psi_t(x)$ has many atomizations by Corollary~\ref{atomic}, the number of atoms in all of them is the number of prime factors of $t$. That is, $\Psi_t(x)$ has elasticity $1$. So we close by asking, ``\emph{What are the possible elasticities of more general semifair polynomials?}'' or, more greedily again, ``\emph{Do all semifair polynomials have elasticity exactly $1$?}''

\subsection*{Acknowledgements.} Part of this work was completed while the author was on a Fordham University Faculty Fellowship and 
visiting the Vietnam Institute for Advanced Studies in Mathematics, Galatasaray University and the Universitá di Firenze. I thank all of these institutions for their support, and \begin{otherlanguage*}{vietnamese}Nguyễn Hữu Dự\end{otherlanguage*}, Ayberk Zeytin and Giorgio Ottaviani for their personal hospitality. I am also grateful to Abe Smith and Dave Swinarski for helpful conversations and to the two anonymous referees for their careful reading of an earlier draft and for suggesting a number of improvements.

\bibspread
\section*{References}
\begin{biblist}[\resetbiblist{99}]
	\bibselect{totaltoparts}
\end{biblist}

\parindent0pt\par
\end{document}
